\documentclass[11pt,reqno]{amsart}
\usepackage{amsmath,mathtools,amsthm,amssymb}
\allowdisplaybreaks[4]
\usepackage{graphicx}
\usepackage{epsfig,epsf,psfrag}
\usepackage{amsfonts}
\usepackage{setspace}
\usepackage{color}
\usepackage[font=small]{caption}
\usepackage[font=footnotesize]{subcaption}
\usepackage[top=1in, bottom=1in, left=1.2in, right=1.2in]{geometry}
\usepackage{afterpage}
\usepackage{url}
\usepackage{isomath}
\usepackage{booktabs}
\usepackage{framed,multirow}
\newtheorem{theorem}{Theorem}[section]

\newtheorem{lemma}[theorem]{Lemma}

\usepackage{algorithm}
\usepackage{algorithmic}
\usepackage{fancyhdr}


\title{DTPFI: A stable algorithm for recovering nonlinear energy potentials in phase field systems}

\author{Tianhao Ni}
\address{School of Mathematical Sciences, Fudan University, Shanghai, 200433, China}
\email{thni@zju.edu.cn}

\author{Jun Lai}
\address{School of Mathematical Sciences, Zhejiang University, Hangzhou, Zhejiang 310027, China}
\email{laijun6@zju.edu.cn}
\keywords{Phase field models, Allen-Cahn equations, Cahn-Hilliard equations, inverse potential problems, parameter identification}

\begin{document}
	\begin{abstract}
This work proposes a Dual Time Phase Field Inversion (DTPFI) method for recovering unknown potential functions in phase field models. The reconstruction is formulated as an optimization problem that minimizes the mismatch between model predictions and observed fields at the final measurement time. 
We prove the differentiability of the measured field with respect to the unknown potential and establish the local convexity of the regularized objective function, thereby ensuring the existence of a local optimal solution. 
For numerical implementation, automatic differentiation is employed to compute gradients, avoiding the expensive  evaluation of analytic gradients.
Extensive numerical experiments demonstrate that DTPFI accurately reconstructs both polynomial and logarithmic potentials and remains robust under measurement noise.
The framework is further extended to inverse problems involving field dependent mobility and joint parameter identification in coupled Cahn-Hilliard-Allen-Cahn systems.
\end{abstract}
	\maketitle

\section{Introduction}
Phase field modeling employs continuous field variables to describe microstructural evolution in materials science \cite{wang1995microstructural,rubin1999three,khachaturyan2013theory}. It has become an important tool for studying phenomena driven by external fields, including temperature \cite{liu2012micro}, stress-strain \cite{hu2007phase}, and electromagnetic fields \cite{sun2023phase}. Beyond its broad applications in materials science, this approach has gradually expanded into areas such as biomedicine \cite{lima2014hybrid,travasso2011phase} and earth sciences \cite{wendler2009phase,mikelic2015phase}, providing an effective bridge between microscopic mechanisms and macroscopic simulations. The fundamental governing equations of phase field theory are commonly represented by two classes of partial differential equations: the Allen-Cahn (AC) equation \cite{allen1979microscopic} for non-conserved order parameters and the Cahn-Hilliard (CH) equation \cite{cahn1958free} for conserved order parameters. However, practical applications still face a critical bottleneck: key model parameters, such as chemical potential coefficients and elastic modulus tensors, are often difficult to measure directly. This limitation can substantially reduce the predictive reliability of phase field models for complex evolution processes. Therefore, the goal of this paper is to develop an effective method for recovering nonlinear energy potentials in phase field models from measured data.

It is worth mentioning that forward problems for phase field equations have been extensively studied, leading to a wealth of theoretical and numerical results. In particular, the mathematical well-posedness of the Allen-Cahn and Cahn-Hilliard equations has been thoroughly investigated. For classical polynomial nonlinearities, especially the double-well potential, the existence and uniqueness of weak solutions have been rigorously established \cite{elliott1989nonconforming, evans2022partial}. The analysis has also been extended to logarithmic nonlinear potentials, which in certain cases provide a more physically accurate description of phase separation despite their mathematical singularities \cite{cherfils2000generalized, morocsanu2016well}. Beyond individual equations, well-posedness theories have been developed for coupled phase field systems, including multi-component Cahn-Hilliard systems and coupled Cahn-Hilliard-Allen-Cahn (CH-AC) models \cite{barrett2002finite, barrett2001fully, brunk2021analysis, feng2006fully}.

From a numerical perspective, the high-order spatial derivatives and strong nonlinearities inherent in phase field equations impose stringent demands on discretization schemes. For spatial discretization, high-order techniques such as finite difference methods \cite{chen2002phase}, finite element methods \cite{shen2010numerical}, and Fourier spectral methods \cite{chen1998applications} have been widely used to accurately capture interfacial evolution. Since phase field models satisfy energy dissipation laws, temporal discretization has increasingly focused on constructing efficient, unconditionally energy stable schemes for robust long time simulations. To date, methods such as convex splitting \cite{eyre1998unconditionally}, exponential time differencing (ETD) \cite{fu2022energy}, and the scalar auxiliary variable (SAV) approach \cite{shen2018scalar} have become standard techniques for handling strong nonlinearities while balancing computational efficiency and physical consistency.

In contrast to the rapid development of theories and numerical algorithms for forward problems, research on inverse problems for phase field models remains at an early stage. On the theoretical side, a uniqueness theory for energy functional inversion was established in \cite{brunk2023uniqueness} by using the weak form of the Cahn-Hilliard equation. The identifiability of energy functionals in coupled Cahn-Hilliard-Allen-Cahn systems has also been demonstrated from concentration field data alone \cite{ni2024uniqueness}, with applications to the modeling of high temperature alloys. On the computational side, a spatiotemporal data driven inversion method based on deep learning was proposed in \cite{bao2025pfwnn}. However, these approaches generally require information on time derivatives of field variables. In experimental observations, high precision instantaneous rates of change are difficult to obtain directly, and numerical differentiation of discrete data inevitably amplifies measurement noise, thereby significantly limiting the applicability of such methods.

For inverse problems in which only field measurements at discrete time points are available, existing studies have primarily focused on coefficient identification in linear parabolic equations or on cases where the nonlinear form is prescribed in advance. For instance, monotonic operators have been constructed using fixed point theorems to develop finite element based convergence proofs and numerical schemes \cite{zhang2022identification}. This framework has been further extended to semi-linear parabolic equations for identifying coefficients in nonlinear terms \cite{kaltenbacher2025reconstruction,kaltenbacher2026identification}. In addition, optimal control frameworks have been widely used to reformulate parameter identification as a functional minimization problem, with corresponding analyses of convergence rates for finite element discretizations \cite{kahle2020parameter,jin2021error,jin2023convergence,zhang2022identification}. Nevertheless, the inversion of unknown nonlinear potentials in phase field equations remains largely unexplored, especially when only concentration observations are available and no time derivative information is used.

To address these challenges, this work proposes the Dual Time Phase Field Inversion (DTPFI) method, which recovers nonlinear energy potentials from field measurements at two discrete time points. We formulate the inverse problem as a finite dimensional optimization problem via basis truncation, establish the differentiability of the parameter to state map, prove the existence of minimizers, and derive local strong convexity of the objective functional. Computationally, we develop a practical algorithm that avoids continuous observations and time derivative data by incorporating automatic differentiation for efficient gradient evaluation, and we justify its equivalence to analytical sensitivity based gradients from a computational graph perspective. We validate DTPFI through extensive experiments on logarithmic and polynomial potentials, demonstrating accurate reconstruction under measurement noise as well as effectiveness for field dependent mobility and joint parameter identification in coupled CH-AC systems.

The remainder of this paper is organized as follows. Section 2 introduces the AC and CH phase field equations, together with their well-posedness properties and relevant Sobolev spaces. Section 3 presents the DTPFI formulation, including the regularized objective function and theoretical results on differentiability and local strong convexity. Section 4 derives the discrete gradients and analyzes the consistency between automatic differentiation and analytical gradient computation. Section 5 provides numerical results for logarithmic and polynomial potential inversion, as well as  joint parameter identification in coupled CH-AC systems. Finally, Section 6 summarizes the main results and discusses future research directions.

\section{Mathematical Formulation}

\subsection{Governing equations}
The phase field method is a modeling approach rooted in thermodynamic principles, which characterizes the dynamical behavior of complex systems through spatiotemporal evolution equations of continuous field variables \cite{steinbach2009phase}. Based on the Ginzburg-Landau theory \cite{ginzburg2009theory}, this method introduces an order parameter field $u(x,t)$ to describe microstructural evolution, thereby establishing a system of nonlinear partial differential equations that couples diffusion kinetics and thermodynamic driving forces \cite{chen2002phase}. According to conservation principles, phase field models are primarily classified into two canonical forms: the Allen-Cahn (AC) equation for non-conserved dynamics and the Cahn-Hilliard (CH) equation for conserved processes. Their variational formulations are expressed as:
\begin{align*}
	\text{Allen-Cahn:} & \quad \frac{\partial u}{\partial t} = -M \frac{\delta \mathcal{F}}{\delta u}, \quad (x,t) \in \Omega \times (0,T], \\
	\text{Cahn-Hilliard:} & \quad \frac{\partial u}{\partial t} = M \Delta \frac{\delta \mathcal{F}}{\delta u}, \quad (x,t) \in \Omega \times (0,T], 
\end{align*}
where $M > 0$ is the mobility coefficient and $T > 0$ is the final time. The total free energy functional $\mathcal{F}(u)$ is defined as:
\begin{align}
	\label{equ:energy_func}
	\mathcal{F}(u) = \int_{\Omega} \left[ \frac{\varepsilon^2}{2}|\nabla u|^2 + F(u) \right] \mathrm{d} x,
\end{align}
where $\varepsilon$ characterizes the interface width and $F(u)$ is the bulk energy density. Two commonly used forms of $F(u)$ and their derivatives $f(u) = F'(u)$ are:
\begin{itemize}
\item The polynomial double-well potential, given by
		\begin{align}\label{doubwell}
			F(u) = \frac{1}{4}(u^2-1)^2, 
		\end{align}
with $f(u) = u^3 - u$.

\item  The logarithmic potential, defined as
		\begin{align}\label{logpot}
			F(u) = \frac{\theta_c}{2}(1-u^2) + \frac{\theta}{2} \left[ (1-u)\ln\frac{1-u}{2} + (1+u)\ln\frac{1+u}{2} \right],
		\end{align}
with derivative $f(u) = \frac{\theta}{2} \ln \left( \frac{1+u}{1-u} \right) - \theta_c u$, where $\theta_c$ denotes the critical temperature and $\theta < \theta_c$ denotes the absolute temperature.
\end{itemize}
Substituting the variational derivative $\frac{\delta \mathcal{F}}{\delta u} = -\varepsilon^2 \Delta u + f(u)$ into the governing equations yields the strong forms as:
\begin{align}
	\label{equ:AC1}
	\text{Allen-Cahn:} & \quad \frac{\partial u}{\partial t} = M(\varepsilon^2\Delta u - f(u)), \\
	\label{equ:CH1}
	\text{Cahn-Hilliard:} & \quad \frac{\partial u}{\partial t} = -M\Delta(\varepsilon^2\Delta u - f(u)).
\end{align}
For convenience, we unify \eqref{equ:AC1} and \eqref{equ:CH1} into a general form:
\begin{align}
	\label{equ:general_form}
	\frac{\partial u}{\partial t} = M(-\Delta)^{\alpha}(\varepsilon^2\Delta u - f(u)),
\end{align}
where $\alpha=0$ and $\alpha=1$ correspond to the AC and CH equations, respectively.

\subsection{Well-posedness}
We consider the phase field equations on a unit hypercube $\Omega = [0,1]^d$ subject to periodic boundary conditions (PBCs). Let $L^2_p(\Omega)$ and $H^k_p(\Omega)$ denote the subspaces of $L^2(\Omega)$ and $H^k(\Omega)$ consisting of functions that satisfy the periodic boundary conditions on $\partial\Omega$. For any $k \in \mathbb{N}$, we define the following norms for these spaces:
\begin{align*}
	&\|u\|_{L^2_p(\Omega)} =\|u\|_{L^2(\Omega)} =  \left( \int_{\Omega} |u|^2 \mathrm{d}x \right)^{1/2},\\
	&\|u\|_{H^k_p(\Omega)} =\|u\|_{H^k(\Omega)} =\left( \sum_{|\beta| \leq k} \int_{\Omega} |D^{\beta} u|^2 \mathrm{d}x \right)^{1/2}.
\end{align*}
Furthermore, we define the Bochner space $\mathcal{L}^k = L^2(t_1,t_2; H^k_p(\Omega))$ equipped with the norm $\|u\|_{\mathcal{L}^k} = \left( \int_{t_1}^{t_2} \|u\|_{H^k_p(\Omega)}^2 \mathrm{d}t \right)^{1/2}$, where $t_1 < t_2$ are two distinct time points within $[0,T]$.

The well-posedness of the systems under the initial condition $u(x,0) = u_0(x)$ is summarized as follows:

\begin{lemma}[Well-posedness of the CH equation \cite{miranville2019cahn,cherfils2011cahn}]
\label{lemma:CH}
Consider the CH equation with initial condition $u_0 \in H^2_p(\Omega)$. 
\begin{itemize}
    \item[1.] For the polynomial potential \eqref{doubwell}, there exists a unique solution $u$ to the CH equation such that $u \in L^{\infty}(0,T; H^2_p(\Omega)) \cap L^2(0,T; H^4_p(\Omega))$. 
    \item[2.] For the logarithmic potential \eqref{logpot}, if $-1 < u_0(x) < 1$ a.e., the unique solution to the CH equation satisfies $u \in L^\infty(0, T; H_p^2(\Omega)) \cap C([0, T]; H^{-1}(\Omega))$.
\end{itemize}
\end{lemma}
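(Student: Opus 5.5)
The plan is a Faedo--Galerkin scheme in the Fourier basis, which suits the periodic box $\Omega=[0,1]^d$: prove a priori estimates, pass to the limit by compactness, and then show uniqueness. For the polynomial potential \eqref{doubwell} the steps are as follows.

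\emph{Galerkin system.} Let $P_N$ be the $L^2$ projection onto trigonometric polynomials of degree at most $N$. Solve the ODE system
\[
\partial_t u_N = -M\Delta P_N(\varepsilon^2\Delta u_N - f(u_N)), \qquad u_N(0)=P_N u_0 .
\]
This system preserves the mean $\int_\Omega u_N\,dx$ and dissipates the energy $\mathcal{F}$ in \eqref{equ:energy_func}. Since $F\ge 0$ and $\mathcal F(u_0)<\infty$ for $u_0\in H^2_p\subset L^4$ when $d\le 3$, this gives a uniform $L^\infty(0,T;H^1_p)$ bound, and hence global existence of $u_N$.

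\emph{Higher-order estimate.} Test with $\Delta^2 u_N$ to get
\[
\tfrac12\tfrac{d}{dt}\|\Delta u_N\|^2 + M\varepsilon^2\|\Delta^2 u_N\|^2 = M(\Delta f(u_N),\Delta^2 u_N).
\]
Expand $\Delta f(u)=(3u^2-1)\Delta u+6u|\nabla u|^2$. Bound the right-hand side with Agmon/Gagliardo--Nirenberg inequalities, using the $H^1$ bound, by $\tfrac{M\varepsilon^2}{2}\|\Delta^2 u_N\|^2+C(1+\|\Delta u_N\|^2)^{q}$. Closing this estimate is the main obstacle, because the cubic term is critical in $d=3$. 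I would first try the interpolation $\|u\|_{L^\infty}\lesssim\|u\|_{H^1}^{1/2}\|u\|_{H^2}^{1/2}$ to reduce the bound to a linear Gronwall inequality on $[0,T]$. Together with elliptic regularity on the torus ($\|\Delta\cdot\|$ controls the $H^2$ seminorm, and the mean is fixed), this yields bounds on $u_N$ in $L^\infty(0,T;H^2_p)\cap L^2(0,T;H^4_p)$, with $\partial_t u_N$ bounded in $L^2(0,T;L^2)$.

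\emph{Passage to the limit and uniqueness.} Aubin--Lions gives strong convergence in $L^2(0,T;H^3_p)$, which is enough to pass to the limit in the cubic term. For uniqueness, take the difference $w$ of two solutions; it has zero mean. Test the equation for $w$ with $(-\Delta)^{-1}w$ and use that $f(u)-f(v)=w\,(u^2+uv+v^2-1)$ with $u,v\in L^\infty(Q_T)$ (since $H^2\subset L^\infty$). This gives $\tfrac{d}{dt}\|w\|_{H^{-1}}^2\le C\|w\|_{H^{-1}}^2$, so $w=0$ by Gronwall.

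\emph{Logarithmic potential.} For \eqref{logpot}, approximate $f$ by regularized potentials $f_\delta$, for instance by replacing the logarithm with $C^2$ quadratic extensions outside $[-1+\delta,1-\delta]$. Apply the polynomial-type argument to the regularized problems, then derive bounds uniform in $\delta$. The key uniform estimate is $\|f_\delta(u_\delta)\|_{L^2(0,T;L^2)}\le C$, obtained by testing with $f_\delta(u_\delta)-\overline{f_\delta(u_\delta)}$ and using the Miranville--Zelik mean-value inequality, which needs $|\overline{u_0}|<1$. The $H^2$ bound then comes from testing with $\partial_t u$ and applying elliptic regularity to $\varepsilon^2\Delta u=f(u)-\mu$, using monotonicity of the singular part. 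This gives $L^\infty(0,T;H^2_p)$, and the equation gives $\partial_t u\in L^2(0,T;H^{-1})$, hence $u\in C([0,T];H^{-1})$. The limit satisfies $|u|<1$ a.e. because $f(u)\in L^2$. Uniqueness follows from the same $H^{-1}$ argument, where the monotonicity of the logarithmic part handles the singular term and the $-\theta_c u$ term is Lipschitz. Since the result is standard, the cited references \cite{miranville2019cahn,cherfils2011cahn} can be invoked for the technical details.
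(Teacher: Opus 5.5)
The paper gives no proof of this lemma; it only quotes it from the cited references, so your sketch has to stand on its own. Part 1 is the standard Galerkin argument and is sound for $d\le 3$, which you correctly assume. It needs one repair. Agmon's inequality alone turns the right-hand side into roughly $C\|\Delta u_N\|^4$, which is a quadratic Gronwall inequality, not a linear one. You can close it in either of two ways:
\begin{itemize}
\item[1.] First test with $u_N$, using $f'\ge -1$, to get $\int_0^T\|\Delta u_N\|^2\,dt\le C$. Then $y'\le (Cy)\,y$ with $Cy\in L^1(0,T)$, and Gronwall applies.
\item[2.] Interpolate $\|u\|_{H^2}\lesssim\|u\|_{H^1}^{2/3}\|u\|_{H^4}^{1/3}$. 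This gives $\|\Delta f(u_N)\|^2\le C(1+\|u_N\|_{H^4}^{4/3})$, which can be absorbed into the dissipation.
\end{itemize}

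The genuine gap is the claim $u\in L^\infty(0,T;H^2_p)$ in Part 2. Consider the estimates you list:
\begin{itemize}
\item[1.] the energy identity, which is all that testing with $\partial_t u$ in the $H^{-1}$ form of the equation produces;
\item[2.] the Miranville--Zelik control of $\overline{f(u)}$;
\item[3.] the $L^2$ bound on $f_\delta(u_\delta)$;
\item[4.] the monotone elliptic bound $\|\Delta u\|\le C(1+\|\mu\|)$ for $-\varepsilon^2\Delta u+f(u)=\mu$.
\end{itemize}
Together these give $\mu\in L^2(0,T;H^1_p)$, and therefore only $u\in L^2(0,T;H^2_p)$.

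Getting $L^\infty$ in time requires $\mu\in L^\infty(0,T;L^2)$. The standard route to that is the time-differentiated estimate
\[
\frac{d}{dt}\|\partial_t u\|_{H^{-1}}^2+M\varepsilon^2\|\nabla\partial_t u\|^2\le C\|\partial_t u\|_{H^{-1}}^2 .
\]
This uses $f_\delta'\ge-\theta_c$. It also needs $\partial_t u(0)\in H^{-1}$, i.e.\ $\mu_0=-\varepsilon^2\Delta u_0+f(u_0)\in H^1_p$, with bounds uniform in $\delta$ for the regularized data. Your hypotheses do not supply this. An $H^2_p$ datum with $|u_0|<1$ a.e.\ may touch $\pm1$, and then $f(u_0)$ need not even lie in $L^2$. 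For example, in $d=2$ take $1-u_0=e^{-1/|x|}$ near a point: $u_0$ is smooth, but $f(u_0)\sim\theta/(2|x|)$.

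The Part 1 route of testing with $\Delta^2u$ is not uniform in $\delta$ either. The term $\Delta f_\delta(u_\delta)$ involves $f_\delta'$ and $f_\delta''$, and these blow up near $\pm1$ as $\delta\to0$.

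Under the stated hypotheses, your argument therefore yields only the smoothing bound $t\|\partial_t u(t)\|_{H^{-1}}^2\le C$, i.e.\ $u\in L^\infty(\tau,T;H^2_p)$ for each $\tau>0$. To reach $L^\infty(0,T;H^2_p)$ you need an additional assumption, for instance:
\begin{itemize}
\item[1.] $\mu_0\in H^1_p$; or
\item[2.] strict separation $\|u_0\|_{L^\infty}<1$. Then the solution stays separated on a short interval, the Part 1 estimate applies there, and smoothing covers the rest.
\end{itemize}
Appealing to the references closes this step only if they impose such a condition.
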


\begin{lemma}[Well-posedness of the AC equation \cite{ladyzhenskaia1968linear,bertacco2021stochastic}]
\label{lemma:AC}
Consider the AC equation with initial condition $u_0 \in H_p^1(\Omega)$. 
\begin{itemize}
    \item[1.] For the polynomial potential \eqref{doubwell}, there exists a unique solution $u$ to the AC equation such that $u \in L^{\infty}(0,T;L_p^2(\Omega))\cap L^2(0,T;H_p^1(\Omega))$. 
    \item[2.] For the logarithmic potential \eqref{logpot}, assuming $-1 < u_0(x) < 1$ a.e., the unique solution to the AC equation satisfies $u \in C(0,T; L_p^2(\Omega)) \cap L^2(0,T; H_p^2(\Omega))$.
\end{itemize}
\end{lemma}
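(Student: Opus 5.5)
This lemma is a collection of classical well-posedness results, quoted from the literature, so the argument below is the standard Galerkin/energy route rather than anything specific to this paper.

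\textbf{Polynomial double-well case.} I would build Faedo--Galerkin approximations $u_N$ on the span of the first $N$ Fourier modes, which are natural for periodic boundary conditions. The ODE system for $u_N$ has local solutions, and a priori bounds will extend them to $[0,T]$.

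The first estimate is the energy identity. Testing the general form \eqref{equ:general_form} with the chemical potential $\mu=-\varepsilon^2\Delta u+f(u)$ gives
\begin{align*}
\frac{d}{dt}\mathcal F(u)+M\|(-\Delta)^{\alpha/2}\mu\|_{L^2}^2=0 .
\end{align*}
Since $F\ge 0$, this yields a uniform bound on $u$ in $L^\infty(0,T;H^1_p)$, plus $L^2$ control of $\mu$ (AC) or $\nabla\mu$ (CH).

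For AC, that energy bound already gives the stated class $L^\infty(0,T;L^2_p)\cap L^2(0,T;H^1_p)$. It also gives $\partial_t u$ bounded in a dual space. Aubin--Lions compactness then lets me pass to the limit in the cubic term $u^3$.

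For CH, I would test additionally with $\Delta^2 u$, i.e. estimate $\tfrac{d}{dt}\|\Delta u\|^2$. The nonlinear term $\Delta f(u)$ is controlled using $H^1\hookrightarrow L^6$ for $d\le 3$, together with Gagliardo--Nirenberg interpolation and Gronwall's inequality. This gives $L^\infty(0,T;H^2_p)\cap L^2(0,T;H^4_p)$, using the hypothesis $u_0\in H^2_p$.

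Uniqueness follows by taking the difference $w$ of two solutions. For AC I would test with $w$; for CH, with $(-\Delta)^{-1}w$, which is legitimate because the mean is conserved. The key fact is the one-sided Lipschitz bound $(f(u)-f(v))(u-v)\ge -|u-v|^2$, after which Gronwall's inequality closes the argument.

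\textbf{Logarithmic case.} Here the expected main obstacle is the singularity of $f$ at $\pm1$. I would regularize $F$ by $F_\delta$: replace the logarithmic part outside $[-1+\delta,1-\delta]$ by its second-order Taylor polynomial, which gives a $C^2$ function with quadratic growth. The polynomial-case theory then applies to each regularized problem.

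The next step is to obtain estimates that are uniform in $\delta$.
\begin{itemize}
\item The energy bound carries over unchanged.
\item For CH, the key additional estimate is a bound on $f_\delta(u_\delta)$ in $L^2(0,T;L^1)$, and then in $L^2$. I would use the Miranville--Zelik inequality $f_\delta(s)(s-m)\ge c|f_\delta(s)|-C$, where $m$ is the conserved mean, which lies in $(-1,1)$. This step is where the hypothesis $-1<u_0<1$ is used.
\item For AC, a comparison/maximum-principle argument keeps $|u_\delta|$ away from $1$ in a suitable averaged sense. Monotonicity of the singular part then gives $\|f_\delta(u_\delta)\|_{L^2}$ bounds.
\end{itemize}

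Passing $\delta\to0$ via monotone-operator (Minty) arguments yields a limit with $|u|<1$ a.e., since $f\in L^1$ forces this. It also yields the stated regularity: $H^2$ for CH from elliptic regularity of $-\varepsilon^2\Delta u=\mu-f(u)$, and $L^2(0,T;H^2_p)$ for AC.

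Uniqueness again follows from the one-sided Lipschitz property, here $f'\ge-\theta_c$.
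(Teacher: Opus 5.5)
The paper gives no proof of this lemma; it is quoted from the two cited works. Your sketch is the standard argument for such results and is correct as a reconstruction: Galerkin approximation, the energy bound and Aubin--Lions for the double well; regularization of the singular part, $\delta$-uniform estimates and a monotonicity passage to the limit for the logarithmic potential; uniqueness from the one-sided Lipschitz bound on $f$. The Cahn--Hilliard estimates you include belong to the companion lemma, not to this one.

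The only step you leave vague is the $\delta$-uniform bound on $f_\delta(u_\delta)$ for Allen--Cahn, and you can make it precise in either of two ways. One is to compare $u_\delta$ with the spatially homogeneous solution of $\dot y=-Mf_\delta(y)$, $y(0)=1$, which satisfies $M\int_0^T f_\delta(y)^2\,\mathrm{d}t=F_\delta(1)-F_\delta(y(T))$, a bounded quantity. The other is to test the regularized equation with $f_\delta(u_\delta)$, which gives $\frac{\mathrm{d}}{\mathrm{d}t}\int_\Omega F_\delta(u_\delta)\,\mathrm{d}x+M\|f_\delta(u_\delta)\|_{L^2}^2\le M\varepsilon^2\theta_c\|\nabla u_\delta\|_{L^2}^2$. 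With that bound, the $L^2(0,T;H^2_p(\Omega))$ and $C([0,T];L^2_p(\Omega))$ properties follow from $\varepsilon^2\Delta u=M^{-1}\partial_t u+f(u)$ and $\partial_t u\in L^2(0,T;L^2_p(\Omega))$.
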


For the logarithmic potential, the admissible state must remain in the physical interval $(-1,1)$, because both $F(u)$ and $f(u)$ are singular at $u=\pm1$.  For the polynomial potential, although no singular constraint is present, we assume that the phase field remains in a fixed bounded interval $I\subset\mathbb{R}$. This boundedness is natural for phase field variables and ensures that the potential and its derivatives are evaluated only on the physically relevant range used in the inverse reconstruction.

\section{Inverse Problems of phase field Equations}
\subsection{Inverse problem}
In this section, we formulate the inverse problem of reconstructing the nonlinear potential function in phase field models. The inversion framework uses field measurements at two discrete time points. Specifically, given the dual time measurement data $\{u_{\text{obs}}(\cdot,t_1), u_{\text{obs}}(\cdot,t_2)\}$ with $0 < t_1 < t_2 \leq T$, our goal is to recover the potential $f$. The reconstruction is formulated as the following constrained optimization problem:
\begin{equation}
	\label{equ:opt1}
	\begin{aligned}
		\min\limits_{f} \mathcal{J}(f) = &\|u_f(\cdot,t_2) - u_{\text{obs}}(\cdot,t_2)\|_{L^2(\Omega)}^2 + \mathcal{R}(f), 
	\end{aligned}
\end{equation}
where $\mathcal{R}(f)$ is a regularization term introduced to mitigate the ill-posedness of the inverse problem. This term is typically defined as the squared $L^2$ norm of $f$, i.e., $\mathcal{R}(f) = \beta\|f\|_{L^2}^2$, with $\beta > 0$ denoting the regularization parameter. The state variable $u_f$ is governed by the following system:
\begin{equation}
	\label{equ:general_phase}
	\left\{
	\begin{aligned}
		&\frac{\partial u_f}{\partial t} = M(-\Delta)^{\alpha}\left(\varepsilon^2\Delta u_f - f(u_f)\right) && \text{in } \Omega \times (t_1, t_2], \\
		&u_f(\cdot,t_1) = u_{\text{obs}}(\cdot,t_1) && \text{in } \Omega, \\
		&u_f \text{ satisfies PBCs} && \text{on } \partial\Omega \times [t_1, t_2].
	\end{aligned}
	\right.
\end{equation}

To address the infinite dimensional nature of the optimization problem \eqref{equ:opt1}, we adopt a parametric approach by expanding the potential function into a series of basis functions. Specifically, we assume that the target potential $f(s)$ can be represented as:
\begin{align*}
	f(s) = \sum_{n=0}^{\infty} a_n \phi_n(s), \quad s\in I,
\end{align*}
where $\{\phi_n\}_{n=0}^\infty$ constitutes a complete basis system in the interval $I\subset\mathbb{R}$. By truncating the infinite series to the first $N+1$ terms, we obtain the finite dimensional approximation:
\begin{align*}
	f_N(s) = \sum_{n=0}^{N} a_n \phi_n(s).
\end{align*}
This transformation converts the original problem \eqref{equ:opt1} into the identification of the finite coefficient vector $\mathbf{a} = (a_0, \dots, a_N) \in \mathbb{R}^{N+1}$:
\begin{equation}
	\label{equ:opt2}
	\begin{aligned}
		\min\limits_{\mathbf{a} \in \mathbb{R}^{N+1}} \mathcal{J}_N(\mathbf{a}) = & \|u_{\mathbf{a}}(\cdot,t_2) - u_{\text{obs}}(\cdot,t_2)\|_{L^2(\Omega)}^2 +  \mathcal{R}(\mathbf{a}), 
	\end{aligned}
\end{equation}
where the regularization term is specified as $\mathcal{R}(\mathbf{a}) = \beta\|\mathbf{a}\|_2^2$, and the state $u_{\mathbf{a}}$ satisfies the discretized phase field equation:
\begin{equation}
	\label{equ:ua}
	\left\{
	\begin{aligned}
		&\frac{\partial u_{\mathbf{a}}}{\partial t} = M(-\Delta)^{\alpha}\left(\varepsilon^2\Delta u_{\mathbf{a}} - \sum_{n=0}^N a_n \phi_n(u_{\mathbf{a}})\right) && \text{in } \Omega \times (t_1, t_2], \\
		&u_{\mathbf{a}}(\cdot,t_1) = u_{\text{obs}}(\cdot,t_1) && \text{in } \Omega, \\
		&u_{\mathbf{a}} \text{ satisfies PBCs} && \text{on } \partial\Omega \times [t_1, t_2].
	\end{aligned}
	\right.
\end{equation}

\subsection{Theoretical Analysis} 
In this subsection, we provide a theoretical foundation for the optimization problem \eqref{equ:opt2}. We first recall the well-posedness results for fourth order linear parabolic equations, as summarized in \cite{ni2024uniqueness}:

\begin{lemma}
	\label{lemma:well_posedness_four_order}
	Consider the following initial-boundary value problem for $u(x,t)$:
	\begin{equation}
		\label{equ:linear CH}
		\left\{
		\begin{aligned}
			&u_{t} + a\Delta^2 u - \Delta (b(x,t)u) = p(x,t) && \text{in }  \Omega \times (t_1,t_2], \\
			&u(x,t_1) = \psi(x) && \text{in } \Omega, \\
			&u \text{ satisfies PBCs} && \text{on } \partial\Omega \times [t_1,t_2].
		\end{aligned}
		\right.
	\end{equation}
	Assume $a > 0$ and $b \in L^{\infty}(t_1,t_2; H_p^2(\Omega))$. For any source term $p \in L^{\infty}(t_1,t_2; L_p^{2}(\Omega))$ and initial value $\psi \in H_p^2(\Omega)$, there exists a unique solution $u \in L^{\infty}(t_1,t_2; H_p^2(\Omega)) \cap L^2(t_1,t_2; H^4_p(\Omega))$. Furthermore, the solution satisfies the stability estimate:
	\begin{align*}
		\|u\|_{L^{\infty}(t_1,t_2; H_p^2(\Omega))} + \|u\|_{L^2(t_1,t_2; H^4_p(\Omega))} \leq C \left( \|\psi\|_{H_p^2(\Omega)} + \|p\|_{L^{\infty}(t_1,t_2; L_p^{2}(\Omega))} \right),
	\end{align*}
	where the constant $C$ depends on $\Omega, T, a$, and $\|b\|_{L^{\infty}(t_1,t_2; H_p^2(\Omega))}$.
\end{lemma}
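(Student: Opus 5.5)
We would prove Lemma \ref{lemma:well_posedness_four_order} by a Faedo--Galerkin scheme in the Fourier basis, using a priori energy estimates. The Fourier basis is natural because $\Omega=[0,1]^d$ carries periodic boundary conditions.

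\textbf{Galerkin approximation.} Let $\{e_k\}$ be the periodic Fourier modes, which are eigenfunctions of $-\Delta$. Denote by $P_m$ the projection onto the span of the first $m$ modes. We look for
\[
u_m(t)=\sum_{|k|\le m} c_k(t)e_k
\]
solving
\[
\partial_t u_m + a\Delta^2 u_m - P_m\Delta(b\,u_m) = P_m p, \qquad u_m(t_1)=P_m\psi .
\]
This is a linear ODE system whose coefficients are in $L^\infty$ in time. Indeed, $b\in L^\infty(H^2_p)$ and $H^2\hookrightarrow L^\infty$ for $d\le 3$, which is the setting we assume. Hence Carath\'eodory theory gives a unique absolutely continuous solution on $[t_1,t_2]$.

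\textbf{Key a priori estimate.} Test the Galerkin equation with $\Delta^2 u_m$; this is admissible since $\Delta^2 u_m$ lies in the Galerkin space. We obtain
\[
\tfrac12\tfrac{d}{dt}\|\Delta u_m\|^2 + a\|\Delta^2u_m\|^2 = (\Delta(bu_m),\Delta^2u_m) + (p,\Delta^2u_m).
\]
We expand $\Delta(bu_m)=b\Delta u_m+2\nabla b\cdot\nabla u_m+u_m\Delta b$ and bound each term, for $d\le 3$:
\begin{itemize}
\item $\|b\Delta u_m\|\le \|b\|_{L^\infty}\|\Delta u_m\|$;
\item $\|\nabla b\cdot\nabla u_m\|\le \|\nabla b\|_{L^4}\|\nabla u_m\|_{L^4}\le C\|b\|_{H^2}\|u_m\|_{H^2}$;
\item $\|u_m\Delta b\|\le \|u_m\|_{L^\infty}\|\Delta b\|\le C\|u_m\|_{H^2}\|b\|_{H^2}$.
\end{itemize}
Altogether $\|\Delta(bu_m)\|\le C\|b\|_{H^2}\|u_m\|_{H^2}$.

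We control the full $H^2$ norm by $\|u_m\|^2+\|\Delta u_m\|^2$, using periodic elliptic regularity. Testing additionally with $u_m$ gives $\tfrac12\tfrac{d}{dt}\|u_m\|^2+a\|\Delta u_m\|^2=(bu_m,\Delta u_m)+(p,u_m)$, which is handled the same way. Young's inequality then absorbs $\tfrac a2\|\Delta^2u_m\|^2$ and yields
\[
\tfrac{d}{dt}\big(\|u_m\|^2+\|\Delta u_m\|^2\big)+a\|\Delta^2u_m\|^2\le C(\|u_m\|^2+\|\Delta u_m\|^2)+C\|p\|^2 .
\]
Gronwall's inequality gives the stated bound uniformly in $m$, with $C$ depending on $\Omega$, $T$, $a$ and $\|b\|_{L^\infty(H^2)}$. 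Here $\|\Delta^2 u\|$ controls the $H^4$ norm modulo lower-order terms, and $\|p\|_{L^2(L^2)}\le C\|p\|_{L^\infty(L^2)}$.

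\textbf{Passage to the limit and uniqueness.} From the equation, $\partial_t u_m$ is bounded in $L^2(L^2)$. Extract subsequences converging weak-$*$ in $L^\infty(H^2)$, weakly in $L^2(H^4)$ and weakly in $H^1(L^2)$. Because the equation is linear, we can pass to the limit directly. The initial datum is attained since $u\in C([t_1,t_2];L^2)$. Weak lower semicontinuity transfers the estimate to the limit $u$. Uniqueness follows by applying the same estimate to the difference of two solutions, which has $\psi=0$ and $p=0$.

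\textbf{Main obstacle.} The delicate step is bounding $\Delta(bu)$ with only $b\in H^2$. The term $u\Delta b$ forces $u\in L^\infty$, and this is why the $H^2$ level of the energy, rather than the $L^2$ level, is essential. The dimension restriction $d\le 3$ needed for the Sobolev embeddings should be stated explicitly.
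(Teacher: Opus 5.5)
The paper gives no proof of this lemma. It quotes the result from \cite{ni2024uniqueness} and then uses it as a black box in the proof of the differentiability theorem. Your Faedo--Galerkin argument is a correct, self-contained substitute that takes a different route from the citation. Testing with $\Delta^2 u_m$ and with $u_m$ and using the product bound $\|\Delta(bu)\|_{L^2}\le C\|b\|_{H^2}\|u\|_{H^2}$ (the algebra property of $H^2$ for $d\le 3$), Young's inequality and Gronwall give exactly the stated estimate. The linear limit passage and the recovery of the initial datum through $H^1(t_1,t_2;L^2)\subset C([t_1,t_2];L^2)$ are standard.

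Compared with the bare citation, your route makes the hypotheses visible:
\begin{itemize}
\item Your argument requires $d\le 3$, through $H^2\hookrightarrow L^\infty$ in the term $u\Delta b$. The statement on $[0,1]^d$ leaves this restriction unstated.
\item Only $\|p\|_{L^2(t_1,t_2;L^2)}$ enters the estimate, which is weaker than the $L^\infty(t_1,t_2;L^2)$ norm in the statement.
\item The dependence of $C$ on $T$, $a$ and $\|b\|_{L^\infty(H^2)}$ becomes explicit (exponential, via Gronwall).
\end{itemize}
The citation buys only brevity.

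One step needs tightening. For uniqueness you cannot apply the Galerkin estimate verbatim to the difference $w$ of two arbitrary solutions in the class; you need an energy identity for $w$ itself. This is easy here. The equation gives $w_t\in L^2(t_1,t_2;L^2)$, so testing with $w$ yields
$\tfrac12\tfrac{d}{dt}\|w\|^2+a\|\Delta w\|^2=(bw,\Delta w)\le\|b\|_{L^\infty}\|w\|\,\|\Delta w\|$,
and Gronwall with $w(t_1)=0$ gives $w\equiv 0$. Alternatively, justify the $H^2$-level identity by the Lions--Magenes chain rule.
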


We now establish the differentiability of the state variable $u_{\mathbf{a}}$ with respect to the parameter vector $\mathbf{a}$, which ensures the existence of gradients for the objective function $\mathcal{J}_N$.

\begin{theorem}\label{thm:differentiability}
	Assume that the basis functions $\{\phi_n\}_{n=0}^{N}$ are in $C^2(I)$, where $I$ is the range of $u_{\mathbf{a}}$, and $u_{\text{obs}}(\cdot,t_1)\in H_p^2(\Omega)$. Then the state variable $u_{\mathbf{a}}$ is twice continuously differentiable with respect to $\mathbf{a}$. The sensitivity functions $v_i = \partial u_{\mathbf{a}} / \partial a_i$ and $w_{ij} = \partial^2 u_{\mathbf{a}} / \partial a_i \partial a_j$ are the unique solutions to the following systems:
	\begin{itemize}
		\item[1.] \textbf{First order sensitivity equation:}
		\begin{equation}
			\label{equ:vi_full}
			\left\{
			\begin{aligned}
				&\frac{\partial v_i}{\partial t} = M(-\Delta)^{\alpha}\left[\varepsilon^2\Delta v_i - \sum_{n=0}^{N} a_{n}\phi_n'(u_{\mathbf{a}})v_i - \phi_i(u_{\mathbf{a}})\right] && \text{in } \Omega\times (t_1,t_2], \\
				&v_i(x,t_1) = 0 && \text{in } \Omega, \\
				& v_i \text{ satisfies PBCs} && \text{on } \partial\Omega\times[t_1,t_2].
			\end{aligned}
			\right.
		\end{equation}
		\item[2.] \textbf{Second order sensitivity equation:}
		\begin{equation}
			\label{equ:wij_full}
			\left\{
			\begin{aligned}
				&\frac{\partial w_{ij}}{\partial t} = M(-\Delta)^{\alpha}\left[\varepsilon^2\Delta w_{ij}-\sum\limits_{n=0}^N\left(a_{n}\phi_{n}''(u_{\mathbf{a}})v_iv_j+a_{n}\phi_n'(u_{\mathbf{a}})w_{ij}\right)\right] \\
				&\quad \quad \quad -M(-\Delta)^{\alpha}\left[\phi_i'(u_{\mathbf{a}})v_j+\phi_j'(u_{\mathbf{a}})v_i\right] && \text{in } \Omega\times [t_1,t_2], \\
				&w_{ij}(x,t_1) = 0 && \text{in } \Omega, \\
				&w_{ij}\text{ satisfies PBCs} && \text{on } \partial\Omega\times[t_1,t_2].
			\end{aligned}
			\right.
		\end{equation}
	\end{itemize}
\end{theorem}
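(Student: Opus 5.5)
We prove differentiability directly by difference quotients. The linearized problems are handled with Lemma \ref{lemma:well_posedness_four_order} in the CH case ($\alpha=1$) and with its second-order analogue in the AC case ($\alpha=0$).

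\textbf{Step 1: existence of $v_i$.} Fix $\mathbf{a}$, an index $i$ and a small $h\neq 0$, and set $\mathbf{a}^h=\mathbf{a}+h\mathbf{e}_i$. Form the difference quotient $q^h=(u_{\mathbf{a}^h}-u_{\mathbf{a}})/h$. Subtracting the two state equations \eqref{equ:ua} and applying the mean value theorem to $f_{\mathbf{a}}(s)=\sum_n a_n\phi_n(s)$ gives
\begin{align*}
\partial_t q^h = M(-\Delta)^{\alpha}\left[\varepsilon^2\Delta q^h - b^h(x,t)\,q^h - \phi_i(u_{\mathbf{a}^h})\right], \qquad q^h(\cdot,t_1)=0.
\end{align*}
Here $b^h=\int_0^1 f_{\mathbf{a}}'(u_{\mathbf{a}}+\tau(u_{\mathbf{a}^h}-u_{\mathbf{a}}))\,d\tau$. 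For $\alpha=1$ this is exactly of the form \eqref{equ:linear CH} with $a=M\varepsilon^2$.

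The needed regularity is $b^h\in L^\infty(t_1,t_2;H^2_p)$ and $\phi_i(u_{\mathbf{a}^h})\in L^\infty(t_1,t_2;L^2_p)$. This follows from $u\in L^\infty(H^2_p)$ (Lemma \ref{lemma:CH}), the embedding $H^2\hookrightarrow L^\infty$ for $d\le 3$, the assumption that $u$ stays in the compact range $I$, and the chain rule with $\phi_n\in C^2(I)$. All these bounds are uniform in small $h$.

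Lemma \ref{lemma:well_posedness_four_order} then gives $\|q^h\|_{L^\infty(H^2)\cap L^2(H^4)}\le C$ uniformly in $h$. In particular $u_{\mathbf{a}^h}\to u_{\mathbf{a}}$ in that space as $h\to 0$, which is Lipschitz continuity of the parameter-to-state map.

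Next set $r^h=q^h-v_i$, where $v_i$ solves \eqref{equ:vi_full}. Its existence and uniqueness also come from Lemma \ref{lemma:well_posedness_four_order}, with $b=f'_{\mathbf{a}}(u_{\mathbf{a}})$ and $p=-\Delta\phi_i(u_{\mathbf{a}})$, or with the source placed inside the Laplacian. The function $r^h$ solves the same linear equation with $b=f'_{\mathbf{a}}(u_{\mathbf{a}})$, zero initial data, and source
\begin{align*}
-M(-\Delta)^{\alpha}\left[(b^h-f'_{\mathbf{a}}(u_{\mathbf{a}}))q^h+\phi_i(u_{\mathbf{a}^h})-\phi_i(u_{\mathbf{a}})\right].
\end{align*}
Using the continuity of $f_{\mathbf{a}}'$ and $\phi_i'$ and the convergence $u_{\mathbf{a}^h}\to u_{\mathbf{a}}$, this source tends to $0$. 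The stability estimate then yields $r^h\to 0$, so $v_i=\partial u_{\mathbf{a}}/\partial a_i$.

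\textbf{Step 2: existence of $w_{ij}$.} Repeat the same argument with $q^h_{ij}=(v_i(\mathbf{a}+h\mathbf{e}_j)-v_i(\mathbf{a}))/h$, the difference quotient of $v_i$ in $a_j$. Differencing \eqref{equ:vi_full} in $\mathbf{a}$ produces the source terms $\phi_j'(u)v_i$ (from the $a_j$-dependence of the coefficient $f'_{\mathbf{a}}$), $\phi_i'(u)v_j$ (from differentiating $\phi_i(u)$), and $f''_{\mathbf{a}}(u)v_jv_i$ (from differentiating $f'_{\mathbf{a}}(u)$). This is exactly where $\phi_n\in C^2$ is needed. Convergence to the solution of \eqref{equ:wij_full} follows as in Step 1.

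\textbf{Step 3: continuity.} Continuity of $\mathbf{a}\mapsto v_i$ and $\mathbf{a}\mapsto w_{ij}$ follows by subtracting the sensitivity equations for $\mathbf{a}$ and $\mathbf{a}'$. The differences in coefficients and sources tend to $0$ by Step 1, and stability closes the argument. Together this gives $C^2$ dependence of $u_{\mathbf{a}}$ on $\mathbf{a}$.

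\textbf{Main obstacle.} The hard step is the coefficient regularity. Lemma \ref{lemma:well_posedness_four_order} requires $b\in L^\infty(H^2)$ and a source in $L^\infty(L^2)$. Because $(-\Delta)$ acts on $\phi_i(u)$ and on products such as $\phi_n''(u)v_iv_j$, these terms contain $\phi'''$ or $|\nabla u|^2$-type quantities, which $C^2$ regularity does not control directly. I would first handle this by moving the Laplacian onto the test function and working in the weak/energy form, where a source $\Delta g$ with $g\in L^\infty(H^1)$ or $L^2(H^2)$ is admissible. Failing that, I would weaken the norm used for the convergence $r^h\to0$ to $L^\infty(L^2)\cap L^2(H^2)$, which is sufficient since only $u(\cdot,t_2)$ in $L^2$ enters $\mathcal{J}_N$.

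For the AC case there is no Laplacian on the nonlinearity, and standard second-order parabolic estimates replace the lemma, so these difficulties are milder. For the logarithmic potential the argument needs the range $I$ to be a compact subinterval of $(-1,1)$, so that $f_{\mathbf{a}}$ and the $\phi_n$ are uniformly $C^2$ there; I would take this as implicit in the hypothesis that $I$ is the range of $u_{\mathbf{a}}$.
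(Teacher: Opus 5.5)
Your route is the paper's route: difference quotients in $a_i$ (and then of $v_i$ in $a_j$), fed into the linear fourth-order stability estimate of Lemma \ref{lemma:well_posedness_four_order}, giving the same limiting sources. Your write-up is actually more complete than the paper's in two places. The remainder $r^h=q^h-v_i$ is what shows that the quotient converges to the solution of \eqref{equ:vi_full}; the paper's Step 2 only asserts that the quotient equation is solvable. Your Step 3 supplies the continuity of $v_i,w_{ij}$ in $\mathbf{a}$, which the word ``continuously'' requires and the paper never checks.

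The obstacle you flag is genuine, and it is exactly where the paper's proof leaves its own hypothesis. To get $b\in L^\infty(t_1,t_2;H^2_p(\Omega))$ and $\Delta p_\delta\in L^\infty(t_1,t_2;L^2_p(\Omega))$, the paper uses $\phi_n\in C^4(I)$. So the claim in your Step 1, that $b^h\in L^\infty(H^2_p)$ follows from the chain rule with $\phi_n\in C^2(I)$, is false. The reason is that $\Delta f'_{\mathbf{a}}(u)$ contains $f'''_{\mathbf{a}}(u)|\nabla u|^2$, and the second-order source $\Delta\bigl(f''_{\mathbf{a}}(u)v_iv_j\bigr)$ needs $f''''_{\mathbf{a}}$. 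By contrast, $\Delta\phi_i(u)=\phi_i'(u)\Delta u+\phi_i''(u)|\nabla u|^2$ is harmless for $d\le 3$, since $\nabla u\in L^4$.

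Your first fallback, moving the Laplacian onto the test function, repairs the sources but not the coefficient, as long as you are aiming at the $L^\infty(H^2)\cap L^2(H^4)$ bound of the lemma. What works under $C^2$ is your second fallback, and it should be the main line rather than a fallback. Test $q_t+M\varepsilon^2\Delta^2q-M\Delta(bq)=M\Delta g$, $q(t_1)=0$, with $q$ and integrate by parts. This gives
\[
\|q\|_{L^\infty(t_1,t_2;L^2_p(\Omega))}+\|q\|_{L^2(t_1,t_2;H^2_p(\Omega))}\le C\,\|g\|_{L^2(t_1,t_2;L^2_p(\Omega))},
\]
with $C$ depending only on $\|b\|_{L^\infty}$, $M$, $\varepsilon$ and $t_2-t_1$. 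In this norm the argument goes through as follows:
\begin{itemize}
\item Boundedness of $f'_{\mathbf{a}}$ on $I$ gives the uniform bound on $q^h$.
\item The Lipschitz bound $|b^h-f'_{\mathbf{a}}(u)|\le C|h|\,|q^h|$ gives $\|(b^h-f'_{\mathbf{a}}(u))q^h\|_{L^2(L^2)}\le C|h|\,\|q^h\|^2_{L^4(L^4)}\to 0$.
\item In the second-order step, the sources $f''_{\mathbf{a}}(u)v_iv_j$, $\phi_i'(u)v_j$, $\phi_j'(u)v_i$ lie in $L^2(L^2)$, because $L^2(H^2)\subset L^2(L^\infty)$ for $d\le 3$.
\item The convergence $\bar f''_h\to f''_{\mathbf{a}}(u)$ needs only uniform continuity of $f''_{\mathbf{a}}$ on $I$ plus dominated convergence.
\end{itemize}
Restructured this way, your argument proves the theorem under the stated $C^2$ hypothesis, which the paper's proof does not.

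One assumption remains, which you share with the paper and should state explicitly rather than derive from Lemma \ref{lemma:CH}. You need $u_{\mathbf{a}+h\mathbf{e}_i}$ to exist on all of $[t_1,t_2]$, with values in $I$ and bounds uniform in small $h$. Lemma \ref{lemma:CH} concerns only the two specific potentials. For a general polynomial $f_{\mathbf{a}}$, for instance one with negative leading coefficient, Cahn--Hilliard solutions need not exist globally.
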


\begin{proof}
	We provide the proof for the Cahn-Hilliard case ($\alpha=1$). The Allen-Cahn case ($\alpha=0$) can be treated analogously using the well-posedness of second order parabolic equations as in \cite{evans2022partial}.
	
	\textbf{Step 1: Continuous dependence on $\mathbf{a}$.} Let $\mathbf{a}_1, \mathbf{a}_2 \in \mathbb{R}^{N+1}$ be two parameter vectors, and $u_{\mathbf{a}_1}, u_{\mathbf{a}_2}$ be their corresponding solutions in $L^\infty(t_1, t_2; H_p^2(\Omega))$ as shown in Lemma \ref{lemma:CH}. The difference $v_{\mathbf{a}} = u_{\mathbf{a}_1} - u_{\mathbf{a}_2}$ satisfies:
	\begin{equation}
		\label{equ:va_diff_system}
		\left\{
		\begin{aligned}
			&\frac{\partial v_{\mathbf{a}}}{\partial t} + M\varepsilon^2\Delta^2 v_{\mathbf{a}} - M \Delta \left( \sum_{n=0}^{N} a_{1,n} \bar{\phi}_n' v_{\mathbf{a}} \right) \\ &= M \Delta \left( \sum_{n=0}^{N} (a_{1,n} - a_{2,n}) \phi_n(u_{\mathbf{a}_2}) \right) && \text{in } \Omega \times (t_1, t_2], \\
			&v_{\mathbf{a}}(x, t_1) = 0 && \text{in } \Omega,\\
				& v_{\mathbf{a}} \text{ satisfies PBCs} && \text{on } \partial\Omega\times[t_1,t_2].
		\end{aligned}
		\right.
	\end{equation}
	where 
	\begin{align*}
		\bar{\phi}_n' = \int_{0}^{1} \phi_n'(u_{\mathbf{a}_2} + \tau v_{\mathbf{a}}) \mathrm{d}\tau.
	\end{align*} 
	Let $b(x,t) = \sum a_{1,n} \bar{\phi}_n'$ and $p(x,t) = M \Delta \sum (a_{1,n}-a_{2,n}) \phi_n(u_{\mathbf{a}_2})$. Since $\phi_n \in C^4(I)$ and $u_{\mathbf{a}_k} \in L^\infty(t_1, t_2; H_p^2(\Omega))$, it follows that $b \in L^{\infty}(t_1, t_2; H_p^2(\Omega))$. Applying the stability estimate in Lemma \ref{lemma:well_posedness_four_order} to the difference system \eqref{equ:va_diff_system}, we have:
	\begin{align*}
			&\|v_{\mathbf{a}}\|_{L^{\infty}(t_1, t_2; H_p^2(\Omega))} + \|v_{\mathbf{a}}\|_{L^2(t_1,t_2; H^4_p(\Omega))} \\
			\leq & C M \sup_{t \in [t_1, t_2]} \left\| \sum_{n=0}^{N} \delta a_n \phi_n(u_{\mathbf{a}_2}(\cdot, t)) \right\|_{H_p^2(\Omega)} \\
			\leq & C M \sum_{n=0}^{N} |\delta a_n| \cdot \|\phi_n(u_{\mathbf{a}_2})\|_{L^{\infty}(t_1, t_2; H_p^2(\Omega))} \\
			\leq & C M \underbrace{\left( \sum_{n=0}^{N} |\delta a_n|^2 \right)^{1/2}}_{\|\mathbf{a}_1 - \mathbf{a}_2\|_2} \left( \sum_{n=0}^{N} \|\phi_n(u_{\mathbf{a}_2})\|_{L^{\infty}(t_1, t_2; H_p^2(\Omega))}^2 \right)^{1/2} \\
			= & C' \|\mathbf{a}_1 - \mathbf{a}_2\|_2,
	\end{align*}
	where $\delta a_n = a_{1,n} - a_{2,n}$. The constant $C'$ depends on $M, \varepsilon, T, \|\mathbf{a}_1\|_2$, and the $H_p^2$-norms of the basis functions $\{\phi_n(u_{\mathbf{a}_2})\}_{n=0}^N$ which are bounded since $u_{\mathbf{a}_2} \in L^{\infty}(t_1, t_2; H_p^2(\Omega))$ and $\phi_n \in C^2(I)$. This chain of inequalities implies that the state variable $u_{\mathbf{a}}$ is Lipschitz continuous with respect to the parameter vector $\mathbf{a}$.
	
	\textbf{Step 2: First-order differentiability.} To prove the differentiability with respect to $a_i$, let $\mathbf{a}_1 = \mathbf{a} + \delta a_i \mathbf{e}_i$ and $\mathbf{a}_2 = \mathbf{a}$, where $\mathbf{e}_i$ denotes the $i$-th unit basis vector in $\mathbb{R}^{N+1}$. We define the difference quotient $v_i^{\delta} = (u_{\mathbf{a}_1} - u_{\mathbf{a}_2})/\delta a_i$. Dividing the difference system \eqref{equ:va_diff_system} by $\delta a_i$, we find that $v_i^{\delta}$ satisfies the following linear fourth order parabolic equation:
	\begin{equation}
		\label{equ:vi_delta_system}
		\left\{
		\begin{aligned}
			&\frac{\partial v_i^{\delta}}{\partial t} + M\varepsilon^2\Delta^2 v_i^{\delta} - M \Delta \left( \sum_{n=0}^{N} a_{1,n} \bar{\phi}_n' v_i^{\delta} \right) = M \Delta \phi_i(u_{\mathbf{a}}) && \text{in } \Omega \times (t_1, t_2], \\
			&v_i^{\delta}(x, t_1) = 0 && \text{in } \Omega, \\
			&v_i^{\delta} \text{ satisfies PBCs} && \text{on } \partial\Omega \times [t_1, t_2],
		\end{aligned}
		\right.
	\end{equation}
	where $\bar{\phi}_n'$ is the integrated derivative defined in Step 1. As $\delta a_i \to 0$, we have $\mathbf{a}_1 \to \mathbf{a}_2$ in $\mathbb{R}^{N+1}$, and from the Lipschitz continuity proved in Step 1, $u_{\mathbf{a}_1} \to u_{\mathbf{a}_2}$ in $L^{\infty}(t_1, t_2; H_p^2(\Omega))$. Consequently, $\bar{\phi}_n' \to \phi_n'(u_{\mathbf{a}_2})=\phi_n'(u_{\mathbf{a}})$ in $L^{\infty}(t_1, t_2; H_p^2(\Omega))$. Following the same arguments as in Step 1, there exists a unique solution $v \in L^{\infty}(t_1,t_2; H_p^2(\Omega)) \cap \mathcal{L}^4$ to the system \eqref{equ:vi_delta_system}.
	\textbf{Step 3: Second order differentiability.} To establish second order differentiability, let $\mathbf{a}_1 = \mathbf{a} + \delta a_j \mathbf{e}_j$. We define the difference quotient for the sensitivity function as $w_{ij}^{\delta} = (v_i(\mathbf{a}_1) - v_i(\mathbf{a}))/\delta a_j$. By subtracting the first order sensitivity equations \eqref{equ:vi_full} for $\mathbf{a}_1$ and $\mathbf{a}_2$, we find that $w_{ij}^{\delta}$ satisfies:
	\begin{equation}
		\label{equ:wij_delta_system_refined}
		\frac{\partial w_{ij}^{\delta}}{\partial t} + M\varepsilon^2\Delta^2 w_{ij}^{\delta} - M \Delta \left( b_{\delta}(x,t) w_{ij}^{\delta} \right) =M \Delta p_{\delta}(x,t),
	\end{equation}
	with $w_{ij}^{\delta}(x, t_1) = 0$, where the coefficient $b_{\delta}$ and the source term $p_{\delta}$ are given by:
\begin{align*}
	b_{\delta}(x,t) &= \sum_{n=0}^{N} a_{1,n} \phi_n'(u_{\mathbf{a}_1}), \\
	p_{\delta}(x,t) &=  \sum_{n=0}^{N} a_n \left( \frac{\phi_n'(u_{\mathbf{a}_1}) - \phi_n'(u_{\mathbf{a}})}{\delta a_j} \right) v_i(\mathbf{a}) + \phi_j'(u_{\mathbf{a}_1}) v_i(\mathbf{a}) + \frac{\phi_i(u_{\mathbf{a}_1}) - \phi_i(u_{\mathbf{a}})}{\delta a_j} .
\end{align*}
	
	First, we examine the regularity of $b_{\delta}$. Since $u_{\mathbf{a}_1} \in L^{\infty}(t_1, t_2; H_p^2(\Omega))$ and $\phi_n \in C^4(I)$, the composition $\phi_n'(u_{\mathbf{a}_1})$ also resides in $L^{\infty}(t_1, t_2; H_p^2(\Omega))$. Thus, $b_{\delta} \in L^{\infty}(t_1, t_2; H_p^2(\Omega))$, which satisfies the coefficient requirement of Lemma \ref{lemma:well_posedness_four_order}.
	
	Next, we show that $M \Delta p_{\delta} \in L^{\infty}(t_1, t_2; L_p^2(\Omega))$. Let $v_j^{\delta} = (u_{\mathbf{a}_1} - u_{\mathbf{a}})/\delta a_j$. Using the mean value theorem, the term in $p_{\delta}$ can be rewritten as:
	\begin{align*}
		p_{\delta}  = \sum_{n=0}^{N} a_n \left( \bar{\phi}_n'' v_j^{\delta} \right) v_i(\mathbf{a}) + \phi_j'(u_{\mathbf{a}_1}) v_i(\mathbf{a}) + \bar{\phi}_i' v_j^{\delta},
	\end{align*}
	where $\bar{\phi}''$ and $\bar{\phi}'$ are defined via the mean value theorem as:
	\begin{equation}
		\label{equ:integrated_derivatives}
		\bar{\phi}_n'' = \int_{0}^{1} \phi_n''(u_{\mathbf{a}} + \tau(u_{\mathbf{a}_1} - u_{\mathbf{a}})) \mathrm{d}\tau, \quad \text{and} \quad \bar{\phi}_i' = \int_{0}^{1} \phi_i'(u_{\mathbf{a}} + \tau(u_{\mathbf{a}_1} - u_{\mathbf{a}})) \mathrm{d}\tau.
	\end{equation}
Since $v_i, v_j^{\delta} \in L^{\infty}(t_1, t_2; H_p^2(\Omega))$ and $\phi_n \in C^4(I)$, each product term belongs to $L^{\infty}(t_1, t_2; H_p^2(\Omega))$. Consequently,  $M\Delta p_{\delta} \in L^{\infty}(t_1, t_2; L_p^2(\Omega))$.
	
	Applying the stability estimate in Lemma \ref{lemma:well_posedness_four_order} to \eqref{equ:wij_delta_system_refined}, we have:
	\begin{align*}
		\|w_{ij}^{\delta}\|_{L^{\infty}(t_1, t_2; H_p^2(\Omega))} + \|w_{ij}^{\delta}\|_{L^2(t_1,t_2; H^4_p(\Omega))} \leq C \|M\Delta p_{\delta}\|_{L^{\infty}(t_1, t_2; L_p^2(\Omega))} \leq C',
	\end{align*}
	where $C'$ is independent of $\delta a_j$. As $\delta a_j \to 0$, $b_{\delta} \to \sum a_n \phi_n'(u_{\mathbf{a}})$ and $M\Delta p_{\delta}$ converges to the source term in \eqref{equ:wij_full}. Thus, $w_{ij}^{\delta}$ converges to the unique solution $w_{ij}$ of \eqref{equ:wij_full}. This proves that $u_{\mathbf{a}}$ is twice Fréchet differentiable.
\end{proof}

Theorem \ref{thm:differentiability} establishes that the objective function \eqref{equ:opt2} is continuously differentiable with respect to the parameter vector $\mathbf{a}$. Given the coercivity of $\mathcal{J}_N$ with the standard $\ell^2$ regularization term, the existence of an optimal solution follows from the Weierstrass extreme value theorem \cite{gopfert1973mathematische}.

\begin{theorem}[Existence of optimal solution]
	The optimization problem \eqref{equ:opt2} admits at least one minimizer $\mathbf{a}^{*} \in \mathbb{R}^{N+1}$.
\end{theorem}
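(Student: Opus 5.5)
The plan is the direct method in finite dimensions: show that $\mathcal{J}_N$ is continuous and coercive on $\mathbb{R}^{N+1}$, then apply the Weierstrass extreme value theorem on a compact sublevel set.

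First, continuity. By Step 1 of the proof of Theorem \ref{thm:differentiability}, the map $\mathbf{a}\mapsto u_{\mathbf{a}}$ is locally Lipschitz from $\mathbb{R}^{N+1}$ into $L^\infty(t_1,t_2;H_p^2(\Omega))$. One technical point is that the solution should also be continuous in time with values in $L^2(\Omega)$, so that evaluation at $t_2$ makes sense. This holds because $u_{\mathbf{a}}\in L^2(t_1,t_2;H^4_p)$ and $\partial_t u_{\mathbf{a}}\in L^2(t_1,t_2;L^2_p)$, which gives $u_{\mathbf{a}}\in C([t_1,t_2];H^2_p)$. The same applies to the difference $v_{\mathbf{a}}$, so the Lipschitz bound transfers to $\|u_{\mathbf{a}_1}(\cdot,t_2)-u_{\mathbf{a}_2}(\cdot,t_2)\|_{L^2(\Omega)}\le C'\|\mathbf{a}_1-\mathbf{a}_2\|_2$. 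Alternatively, continuity follows immediately from the differentiability in Theorem \ref{thm:differentiability}. Since the squared $L^2$ norm is continuous and $\beta\|\mathbf{a}\|_2^2$ is continuous, $\mathcal{J}_N$ is continuous on $\mathbb{R}^{N+1}$.

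Second, coercivity, which needs no estimate on the forward problem. The fidelity term is nonnegative, so $\mathcal{J}_N(\mathbf{a})\ge \beta\|\mathbf{a}\|_2^2\to\infty$ as $\|\mathbf{a}\|_2\to\infty$.

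Third, fix any $\mathbf{a}_0$ (for instance $\mathbf{a}_0=0$) and consider the sublevel set $S=\{\mathbf{a}:\mathcal{J}_N(\mathbf{a})\le \mathcal{J}_N(\mathbf{a}_0)\}$. It is nonempty, contained in the closed ball of radius $(\mathcal{J}_N(\mathbf{a}_0)/\beta)^{1/2}$, and closed by continuity, so it is compact in $\mathbb{R}^{N+1}$. By Weierstrass, $\mathcal{J}_N$ attains its minimum on $S$ at some $\mathbf{a}^*$. Since every point outside $S$ has a larger value, $\mathbf{a}^*$ is a global minimizer.

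The main obstacle is making sure $u_{\mathbf{a}}$ is well defined for every $\mathbf{a}$, or at least on a closed set containing $S$, together with the time-trace continuity issue handled above. For general coefficients the truncated potential $f_N$ need not preserve the range $I$, nor the bounds needed for the well-posedness in Lemma \ref{lemma:CH} and Lemma \ref{lemma:AC}. I would handle this the way the paper implicitly does: adopt the standing assumption that $u_{\mathbf{a}}$ exists and stays in $I$ for the admissible $\mathbf{a}$. If necessary, I would restrict to a closed admissible set $A\subset\mathbb{R}^{N+1}$ on which this holds. Intersecting $S$ with $A$ keeps the set compact, so the same argument goes through there.
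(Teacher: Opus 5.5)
Your proposal is correct and follows the same route as the paper: continuity of $\mathcal{J}_N$ from Theorem~\ref{thm:differentiability} (or the Lipschitz estimate of its Step 1), coercivity from $\mathcal{J}_N(\mathbf{a})\ge\beta\|\mathbf{a}\|_2^2$, and the Weierstrass theorem applied on a compact sublevel set. Your additional remarks on the time trace at $t_2$ and on global existence of $u_{\mathbf{a}}$ for arbitrary $\mathbf{a}$ make explicit a standing assumption that the paper's one-line argument leaves implicit.
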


We next demonstrate that the objective function is strictly convex in a neighborhood of the minimizer. This property ensures the local uniqueness of the solution and the convergence of gradient based optimization algorithms.

\begin{theorem}[Local strong convexity]
	Consider the optimization problem \eqref{equ:opt2} with the $\ell^2$ regularization term $\mathcal{R}(\mathbf{a}) = \beta\|\mathbf{a}\|_2^2$. Suppose the parameter estimate $\mathbf{a}$ lies within a $\rho_1$-neighborhood of the optimal solution $\mathbf{a}^{*}$, i.e., $\|\mathbf{a} - \mathbf{a}^{*}\|_2 \leq \rho_1$, and the corresponding residual satisfies:
	\begin{align}
		\|u_{\mathbf{a}}(\cdot,t_2) - u_{\text{obs}}(\cdot,t_2)\|_{L^2(\Omega)} \leq \rho_2.
	\end{align}
	Then, for a sufficiently large regularization weight $\beta > C\rho_2$, where $C$ is a constant determined by the stability of the second order sensitivity functions, the objective function $\mathcal{J}_N$ is strongly convex within this neighborhood.
\end{theorem}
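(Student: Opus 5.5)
We compute the Hessian of $\mathcal{J}_N$ directly, using the second order differentiability from Theorem \ref{thm:differentiability}, and show it is uniformly positive definite on the stated neighborhood. Write $r_{\mathbf{a}} = u_{\mathbf{a}}(\cdot,t_2) - u_{\text{obs}}(\cdot,t_2)$. Differentiating $\mathcal{J}_N(\mathbf{a}) = \|r_{\mathbf{a}}\|_{L^2(\Omega)}^2 + \beta\|\mathbf{a}\|_2^2$ twice, with $v_i = \partial u_{\mathbf{a}}/\partial a_i$ and $w_{ij} = \partial^2 u_{\mathbf{a}}/\partial a_i\partial a_j$, gives
\begin{align*}
	\frac{\partial^2 \mathcal{J}_N}{\partial a_i \partial a_j}(\mathbf{a}) = 2\int_\Omega v_i(\cdot,t_2)\, v_j(\cdot,t_2)\,\mathrm{d}x + 2\int_\Omega r_{\mathbf{a}}\, w_{ij}(\cdot,t_2)\,\mathrm{d}x + 2\beta\delta_{ij}.
\end{align*}
Differentiating under the integral is justified because $\mathbf{a}\mapsto u_{\mathbf{a}}(\cdot,t_2)$ is $C^2$ into $L^2(\Omega)$. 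Evaluation at $t=t_2$ makes sense because $L^\infty(t_1,t_2;H^2_p)\cap L^2(t_1,t_2;H^4_p)$ with $\partial_t u\in L^2(t_1,t_2;L^2_p)$ embeds into $C([t_1,t_2];H^2_p)$.

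We split the Hessian $H(\mathbf{a})$ into three terms and bound each for $\boldsymbol{\xi}\in\mathbb{R}^{N+1}$.
\begin{itemize}
	\item[1.] The Gauss--Newton term is $2\|\sum_i \xi_i v_i(\cdot,t_2)\|_{L^2}^2 \ge 0$, so it can be dropped.
	\item[2.] The regularization term contributes exactly $2\beta\|\boldsymbol{\xi}\|_2^2$.
	\item[3.] For the residual term, Cauchy--Schwarz gives
	\begin{align*}
		\Bigl|2\int_\Omega r_{\mathbf{a}} \sum_{i,j}\xi_i\xi_j w_{ij}(\cdot,t_2)\,\mathrm{d}x\Bigr| \le 2\rho_2 \Bigl(\sum_{i,j}\|w_{ij}(\cdot,t_2)\|_{L^2(\Omega)}^2\Bigr)^{1/2}\|\boldsymbol{\xi}\|_2^2,
	\end{align*}
	using the Frobenius bound on the quadratic form.
\end{itemize}
Hence $\boldsymbol{\xi}^T H(\mathbf{a})\boldsymbol{\xi} \ge 2(\beta - \rho_2 K)\|\boldsymbol{\xi}\|_2^2$, where $K=\sup_{\|\mathbf{a}-\mathbf{a}^*\|_2\le\rho_1}(\sum_{i,j}\|w_{ij}(\cdot,t_2)\|_{L^2}^2)^{1/2}$. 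Taking $C=K$ (or any larger constant), the condition $\beta > C\rho_2$ yields a uniform modulus $\mu = 2(\beta - C\rho_2)>0$. Strong convexity on the ball follows because the ball is convex: for any two points we integrate the Hessian bound along the segment joining them (Taylor's formula with integral remainder).

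The main obstacle is showing that $K$ is finite and uniform over the closed ball $\|\mathbf{a}-\mathbf{a}^*\|_2\le\rho_1$. For this we apply the stability estimate of Lemma \ref{lemma:well_posedness_four_order} to \eqref{equ:wij_full}.
\begin{itemize}
	\item[1.] The coefficient $b=\sum_n a_n\phi_n'(u_{\mathbf{a}})$ must be bounded in $L^\infty(t_1,t_2;H^2_p)$ uniformly in $\mathbf{a}$.
	\item[2.] The source $\Delta[\sum_n a_n\phi_n''(u_{\mathbf{a}})v_iv_j + \phi_i'(u_{\mathbf{a}})v_j+\phi_j'(u_{\mathbf{a}})v_i]$ must be bounded in $L^\infty(t_1,t_2;L^2_p)$. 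We control the products using that $H^2_p$ is an algebra for $d\le3$, together with the chain rule for $\phi\circ u$ with $u$ taking values in $I$.
	\item[3.] These bounds need uniform bounds on $\|u_{\mathbf{a}}\|_{L^\infty H^2}$ and $\|v_i\|_{L^\infty H^2}$. We get them from the Lipschitz estimate of Step 1 in Theorem \ref{thm:differentiability} (which bounds $u_{\mathbf{a}}$ relative to $u_{\mathbf{a}^*}$) and from applying Lemma \ref{lemma:well_posedness_four_order} to \eqref{equ:vi_full}.
\end{itemize}
This regularity control of compositions like $\phi_n''(u_{\mathbf{a}})$ in $H^2$ is delicate. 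It requires more smoothness of the basis functions than $C^2$, so we assume $\phi_n\in C^4(I)$, as is implicitly done in the proof of Theorem \ref{thm:differentiability}. Alternatively, continuity of $\mathbf{a}\mapsto w_{ij}(\cdot,t_2)$ on the compact ball gives finiteness of $K$ directly once the $C^2$ dependence is accepted. With $K<\infty$, the constant $C$ depends on $M,\varepsilon,T,\rho_1,\|\mathbf{a}^*\|_2$ and the basis functions, and the theorem follows.
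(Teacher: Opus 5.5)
Your proposal is correct and follows essentially the same route as the paper. The Hessian splits into three parts: the positive semidefinite Gauss--Newton term, the residual-weighted second-sensitivity term (controlled by Cauchy--Schwarz and the stability estimate for $w_{ij}$), and $2\beta I$. Together these give $H(\mathcal{J}_N)\succeq 2(\beta - C\rho_2)I$.

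Your additions fill in points the paper leaves implicit:
\begin{itemize}
\item You make $C$ uniform over the closed ball. The compactness argument is the cleaner of your two options, since the Step~1 Lipschitz constant itself depends on $H^2$ bounds for $u_{\mathbf{a}}$.
\item You observe that $\phi_n\in C^4(I)$ is really needed.
\item You pass from the Hessian bound to strong convexity along segments. Like the paper, this step reads the $\rho_2$ residual bound as holding on the whole ball.
\end{itemize}
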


\begin{proof}
	The Hessian matrix of the objective function $\mathcal{J}_N(\mathbf{a})$ is given by the sum of three terms:
	\begin{align*}
		H(\mathcal{J}_N) = \underbrace{2 \int_{\Omega} \mathbf{v}(\cdot,t_2) \mathbf{v}^T(\cdot,t_2) \mathrm{d}\mathbf{x}}_{G} + \underbrace{2 \int_{\Omega} (u_{\mathbf{a}}(\cdot,t_2) - u_{\text{obs}}(\cdot,t_2)) \mathbb{W}(\cdot,t_2) \mathrm{d}\mathbf{x}}_{E} + 2\beta I,
	\end{align*}
	where $\mathbf{v} = (v_0, \dots, v_N)^T$ is the vector of first-order sensitivities, $\mathbb{W} = (w_{ij})$ is the matrix of second order sensitivities, and $2\beta I$ is the Hessian of the regularization term $\beta \|\mathbf{a}\|_2^2$.
	
	We analyze the positive definiteness of $H(\mathcal{J}_N)$ by considering the quadratic form $\mathbf{h}^T H(\mathcal{J}_N) \mathbf{h}$ for an arbitrary non-zero vector $\mathbf{h} \in \mathbb{R}^{N+1}$:
	\begin{enumerate}
		\item The first term $G$ is the Gauss-Newton matrix, which is positive semi-definite: $\mathbf{h}^T G \mathbf{h} = 2 \|\mathbf{h}^T \mathbf{v}(\cdot,t_2)\|_{L^2(\Omega)}^2 \geq 0$.
		\item For the second term $E$, the quadratic form is 
		\begin{align*}
			\mathbf{h}^T E \mathbf{h} = 2 \int_{\Omega} (u_{\mathbf{a}}(\cdot,t_2) - u_{\text{obs}}(\cdot,t_2)) \left(\sum_{i,j=0}^N w_{ij} h_i h_j\right) \mathrm{d}\mathbf{x}.
		\end{align*}
		Let $\mathcal{W}_h = \sum_{i,j=0}^N w_{ij}(\cdot,t_2) h_i h_j$. Based on the stability estimate from Lemma \ref{lemma:well_posedness_four_order}, there exists a constant $C > 0$ such that $\|\mathcal{W}_h\|_{L^2(\Omega)} \leq C \|\mathbf{h}\|_2^2$. Applying the Cauchy-Schwarz inequality, we have:
		\begin{align*}
			|\mathbf{h}^T E \mathbf{h}| \leq 2 \|u_{\mathbf{a}} - u_{\text{obs}}\|_{L^2(\Omega)} \|\mathcal{W}_h\|_{L^2(\Omega)} \leq 2 \rho_2 C \|\mathbf{h}\|_2^2.
		\end{align*}
		This implies $E \succeq -2C\rho_2 I$.
		\item The regularization term contributes $2\beta I$, which is strictly positive definite for $\beta > 0$.
	\end{enumerate}
	
	Combining these bounds, we obtain:
	\begin{align*}
		H(\mathcal{J}_N) = G + E + 2\beta I \succeq 0 - 2C\rho_2 I + 2\beta I = 2(\beta - C\rho_2) I.
	\end{align*}
	By choosing $\beta > C\rho_2$, the eigenvalue of the Hessian is strictly positive, i.e., $H(\mathcal{J}_N) \succ 0$. Thus, $\mathcal{J}_N(\mathbf{a})$ is strongly convex in the specified $\rho_1$-neighborhood.
\end{proof}

In practice, the regularization parameter $\beta$ balances fidelity to the measured data and stability of the recovered potential. A very small $\beta$ may lead to overfitting of noisy observations and oscillatory reconstructed coefficients, whereas an overly large $\beta$ can over smooth the solution and bias the recovered potential toward the chosen basis prior. Guided by the local convexity condition above, $\beta$ should be large enough to control the residual dependent Hessian perturbation, but not so large that the data misfit term is dominated by the penalty. In the numerical experiments, we select $\beta$ empirically by choosing the smallest value that yields stable convergence and reconstruction errors.

\section{Optimization Scheme}
\subsection{Discretization}

To solve the inverse problem \eqref{equ:opt2} numerically, we employ a gradient-based optimization algorithm. In the continuous setting, the derivative of the objective function with respect to each basis coefficient $a_i$ is given by:
\begin{equation}
	\label{equ:continuous_gradient}
	\frac{\partial \mathcal{J}_{N}}{\partial a_i} = 2\int_{\Omega}(u_{\mathbf{a}}(\mathbf{x},t_2) - u_{\text{obs}}(\mathbf{x},t_2))v_i(\mathbf{x},t_2)\mathrm{d}\mathbf{x} + 2\beta a_i,  
\end{equation}
where the first order sensitivity variable $v_i = \partial u_{\mathbf{a}} / \partial a_i$ satisfies the linear parabolic system \eqref{equ:vi_full}.

To evaluate this gradient at the discrete level, we perform both spatial and temporal discretizations. Let $N_s$ be the number of uniformly distributed spatial measurement points $\{\mathbf{x}^{(n)}\}_{n=1}^{N_s}$, and let $N_t$ denote the total number of time steps in the interval $[t_1, t_2]$. We denote $u_{N_t}$ and $v_{i, N_t}$ as the numerical approximations of the state variable and the sensitivity function at the final time $t_2$, respectively. The discrete objective function is then formulated as:
\begin{align}
	\label{equ:discrete_loss_func}
	\mathcal{J}_{N,N_s} = \frac{1}{N_s} \sum\limits_{n=1}^{N_s} \left[u_{N_t}(\mathbf{x}^{(n)})-u_{\text{obs}}(\mathbf{x}^{(n)},t_2)\right]^2 + \beta \sum\limits_{i=0}^N a_i^2,
\end{align}
and the corresponding fully discrete gradient is calculated as:
\begin{align}
	\label{equ:discrete_gradient}
	\frac{\partial \mathcal{J}_{N,N_s}}{\partial a_i} = \frac{2}{N_s}\sum_{n=1}^{N_s} \left[ u_{N_t}(\mathbf{x}^{(n)}) - u_{\text{obs}}(\mathbf{x}^{(n)}, t_2) \right] v_{i, N_t}(\mathbf{x}^{(n)}) + 2\beta a_i,
\end{align}
for $i=0, 1, \dots, N$. 
Equation \eqref{equ:discrete_gradient} indicates that the core of the gradient computation lies in obtaining the state variable $u_{N_t}$ and its corresponding sensitivity $v_{i, N_t}$ at the final time step. To this end, we adopt the semi-implicit Fourier spectral method (SIFSM) \cite{chen2002phase} as the forward solver. This approach utilizes spectral expansion in the spatial domain to achieve high-order accuracy, while treating linear terms implicitly in time to ensure unconditional numerical stability. To maintain computational efficiency, nonlinear terms are treated explicitly. For the generalized phase field equation \eqref{equ:general_phase}, the first order accurate semi-implicit scheme in the Fourier space is given by:
\begin{equation*}
	\frac{\{u(t+\Delta t)\}_k - \{u(t)\}_k}{\Delta t} = -M\varepsilon^2 |\mathbf{k}|^{2\alpha+2} \{u(t+\Delta t)\}_k - M|\mathbf{k}|^{2\alpha} \{f(u(t))\}_k,
\end{equation*}
where $\Delta t$ is the time step, $\mathbf{k} = (k_1, \dots, k_d)$ is the wave vector, and $\{\cdot\}_k$ denotes the Fourier transform operator. Rearranging the terms yields the following recurrence formula in the spectral space:
\begin{equation}
	\label{equ:SIMSM}
	\{u(t+\Delta t)\}_k = \frac{\{u(t)\}_k - M |\mathbf{k}|^{2\alpha} \Delta t \{f(u(t))\}_k}{1 + M \varepsilon^2 |\mathbf{k}|^{2\alpha + 2} \Delta t}.
\end{equation}

Under this framework, given the observational data $u_{\text{obs}}(\cdot, t_1)$ as the initial state, the forward evolution of the parameterized variable $u_{\mathbf{a}}$ is computed via the discrete iterative scheme:
\begin{align*}
	&\{u_{\mathbf{a},\ell}\}_k = \frac{\{u_{\mathbf{a},\ell-1}\}_k - M|\mathbf{k}|^{2\alpha}\Delta t \left\{\sum_{n=0}^N a_n\phi_n(u_{\mathbf{a},\ell-1})\right\}_k}{1 + M\varepsilon^2|\mathbf{k}|^{2\alpha+2}\Delta t}, \quad \ell = 1,2,\dots,N_t,\\
	&u_{\mathbf{a},0}(\mathbf{x}) = u_{\text{obs}}(\mathbf{x}, t_1),
\end{align*}
where $\ell$ denotes the time-step index. 

Subsequently, to evaluate the sensitivity variable $v_i$ required for \eqref{equ:discrete_gradient}, we differentiate the above iterative scheme with respect to the coefficient $a_i$. By applying the chain rule, the discrete evolution equation for $v_i$ can be derived as:
\begin{align}
	\label{equ:iter_v}
	\left\{v_{i,\ell}\right\}_k = \frac{\left\{v_{i,\ell-1}\right\}_k - M|\mathbf{k}|^{2\alpha}\Delta t \left\{\phi_i(u_{\mathbf{a},\ell-1}) + \sum_{n=0}^N a_n \phi_n'(u_{\mathbf{a},\ell-1}) v_{i,\ell-1} \right\}_k}{1 + M\varepsilon^2|\mathbf{k}|^{2\alpha+2}\Delta t},
\end{align}
for $\ell = 1,\dots,N_t$. Since the initial state $u_{\mathbf{a},0}$ is fixed by observational data and is independent of the parameters $\mathbf{a}$, the initial condition for the sensitivity evolution vanishes, i.e., $v_{i,0}(\mathbf{x}) \equiv 0$. This discrete recurrence corresponds to solving the continuous first order sensitivity equation \eqref{equ:vi_full} via the SIFSM. By synchronously evolving $u_{\mathbf{a}}$ and $v_i$ until the final time $t_2$, the numerical gradient of the objective function is accurately obtained and then substituted into \eqref{equ:discrete_gradient} for parameter updates.

\subsection{Automatic Gradient Computation via Automatic Differentiation}

In the previous section, we derived the discrete sensitivity equations within the framework of the semi-implicit Fourier spectral method (SIFSM). However, traditional analytical derivation methods often lack flexibility in practical applications. Any modification to the potential function model, the choice of basis functions, or the boundary conditions leads to a complete reconstruction of both the theoretical derivation and the underlying numerical implementation. To overcome this limitation, this section introduces a modular parameter inversion framework that uses automatic differentiation (AD) by unrolling the numerical solver into a computational graph.

Unlike classical physics-informed neural networks (PINNs) \cite{yuan2022pinn,bao2025pfwnn}, which typically treat PDEs as soft penalty constraints within the loss function, our framework integrates the exact numerical iteration logic directly into the computational architecture. By treating the SIFSM scheme as a sequence of differentiable numerical layers, the entire forward simulation is formulated as a fully differentiable mapping from the parameter space to the state variables. This design enables seamless gradient based optimization via backpropagation while strictly preserving the high-order numerical precision and stability of spectral methods. The specific algorithmic workflow is illustrated in Figure \ref{fig:flow}.

\begin{figure}[htbp]
	\centering
	\includegraphics[width=0.95\linewidth]{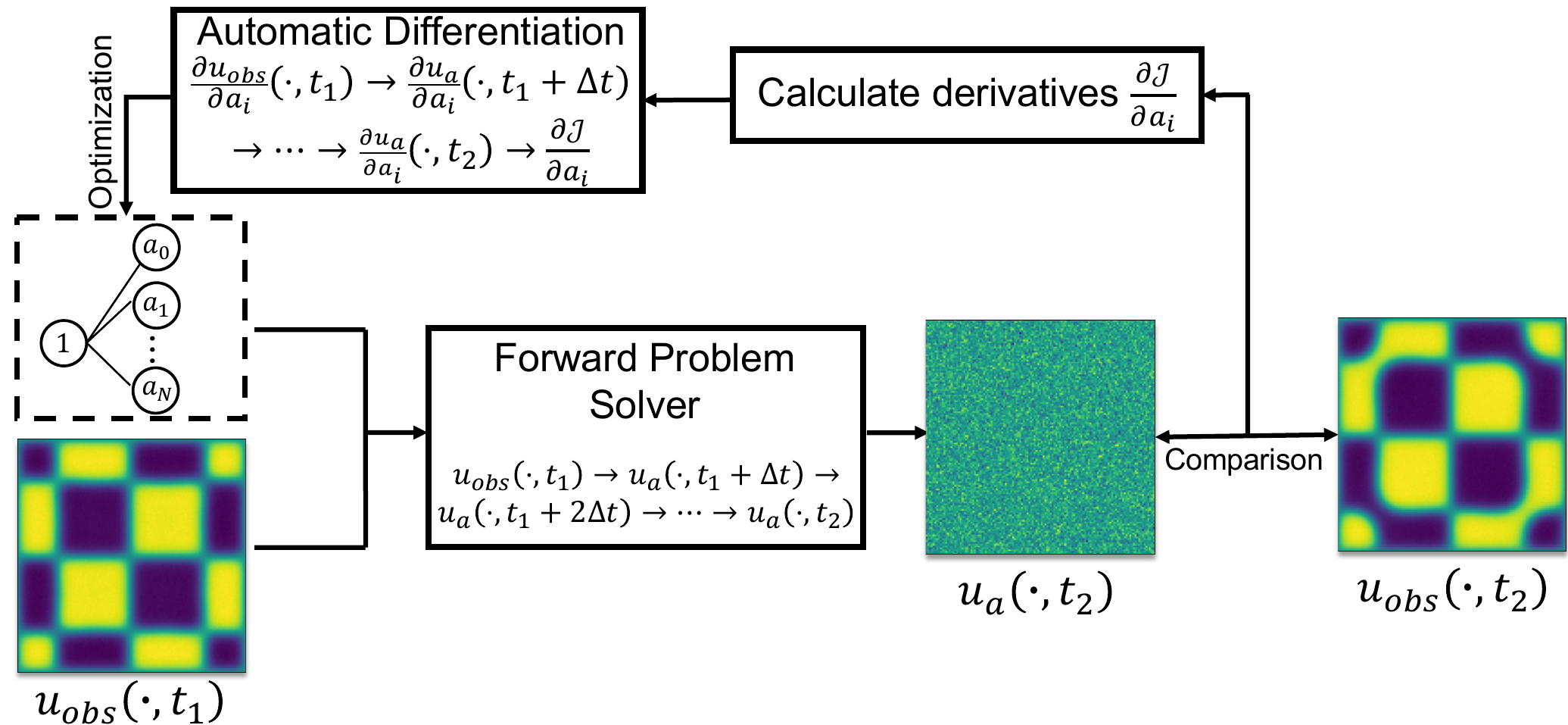}
	\caption{Overview of the DTPFI workflow, illustrating the integration of the physical solver as a differentiable operator within the optimization loop.}
	\label{fig:flow}
\end{figure}

In the forward propagation stage, we use a customized input layer to map the parameters to the computational graph. This layer consists of a single input node connected to an output layer of $N+1$ nodes, without bias terms or nonlinear activations. By fixing the input node to a constant value of $1$, the output of the $i$-th neuron is identical to the $i$-th component of the weight vector $\mathbf{a}$. This mapping transforms the optimization of the basis coefficients $\mathbf{a} = \{a_0, \dots, a_N\}$ into a standard backpropagation process for network weights. Subsequently, the weight vector $\mathbf{a}$ and the initial condition $u_{\mathbf{a},0} = u_{\text{obs}}(\cdot, t_1)$ are passed into cascaded iterative modules (Figure \ref{fig:graph}a). Each module represents a differentiable numerical propagator $\mathcal{F}$ governed by the SIFSM scheme:
\begin{equation}
	\label{equ:F_mapping}
	u_{\mathbf{a},\ell} = \mathcal{F}(u_{\mathbf{a},\ell-1}, \mathbf{a}), \quad \ell = 1, \dots, N_t,
\end{equation}
where the specific operations within $\mathcal{F}$ are defined by the recurrence relation \eqref{equ:SIMSM} and illustrated in Figure \ref{fig:graph}b. After $N_t$ time steps, the final state $u_{\mathbf{a},N_t}$ is used to compute the data loss $L(\mathbf{a})$ and the regularization term $\mathcal{R}(\mathbf{a})$, yielding the total objective function $\mathcal{J}_N(\mathbf{a}) = L(\mathbf{a}) + \mathcal{R}(\mathbf{a})$, with:
\begin{align*}
	L(\mathbf{a}) = \frac{1}{N_s} \sum_{n=1}^{N_s} \left( u_{N_t}(\mathbf{x}^{(n)}) - u_{\text{obs}}(\mathbf{x}^{(n)}, t_2) \right)^2.
\end{align*}

\begin{figure}[htbp]
	\centering
	\includegraphics[width=0.95\linewidth]{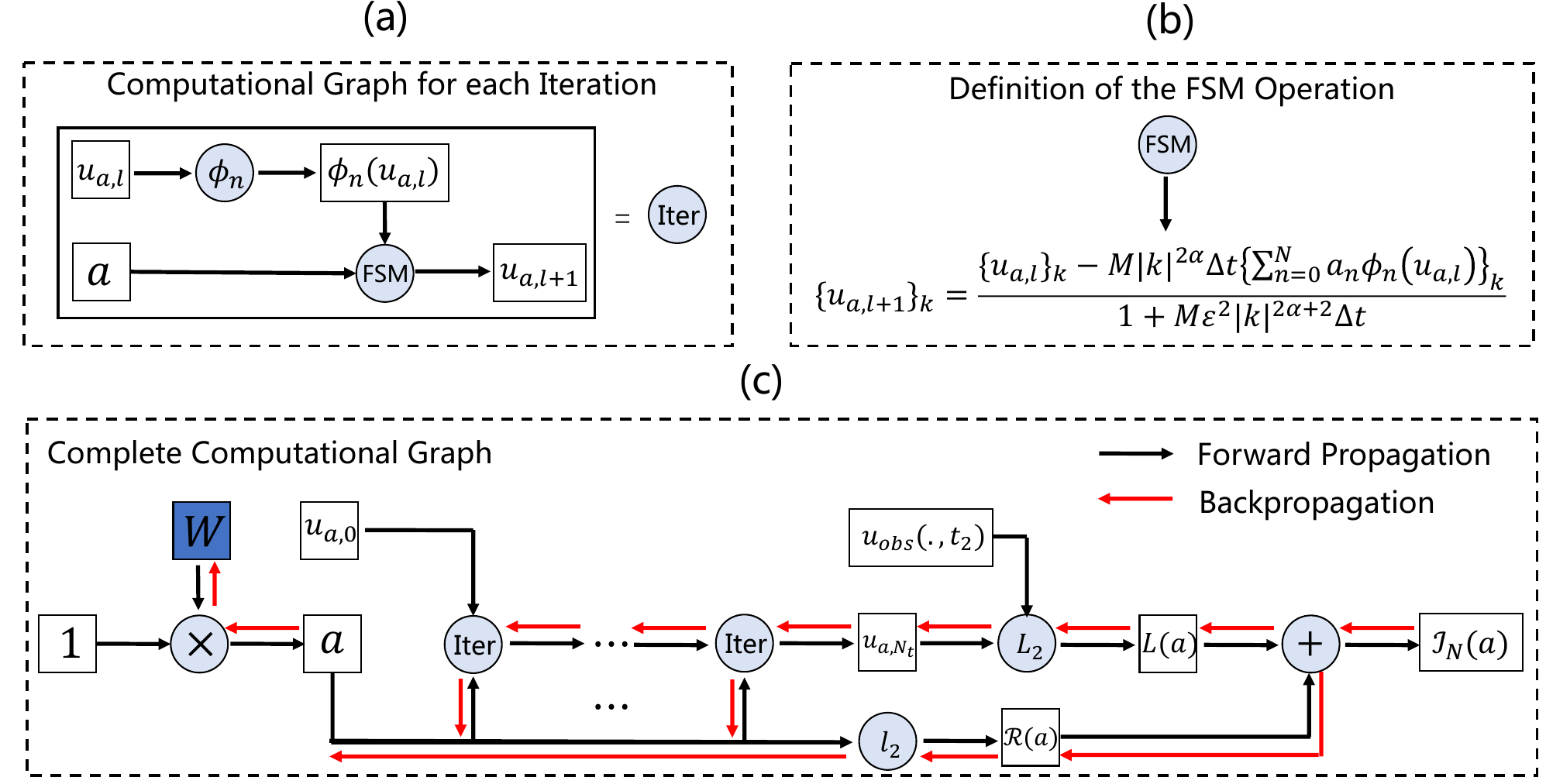}
	\caption{Detailed computational graph of the DTPFI framework: (a) the cascaded time-stepping structure, (b) internal operations of the SIFSM module in spectral space, and (c) the integrated architecture where black and red arrows denote forward and backward propagation, respectively.}
	\label{fig:graph}
\end{figure}

During backward propagation, the gradient flow originates from the objective node. For the regularization term $\mathcal{R}(\mathbf{a})$, the framework directly computes $\partial \mathcal{R} / \partial \mathbf{a}$. For the data loss $L(\mathbf{a})$, as the parameters $\mathbf{a}$ are shared across all temporal modules, the derivative is obtained via the chain rule by accumulating gradients over all paths traversing $\mathbf{a}$. Let the adjoint variable at the $\ell$-th step be $\lambda_\ell = \partial \mathcal{J}_N / \partial u_{\mathbf{a},\ell}$. The total gradient at node $\mathbf{a}$ is the sum of the regularization derivative and the accumulated local gradient branches:
\begin{equation}
	\label{equ:AD_total_grad}
	\frac{\mathrm{d} \mathcal{J}_N}{\mathrm{d} \mathbf{a}} = \frac{\partial \mathcal{R}(\mathbf{a})}{\partial \mathbf{a}} + \sum_{\ell=1}^{N_t} \left( \lambda_\ell \cdot \frac{\partial \mathcal{F}(u_{\mathbf{a},\ell-1}, \mathbf{a})}{\partial \mathbf{a}} \right),
\end{equation}
where $\partial \mathcal{F} / \partial \mathbf{a}$ denotes the partial Jacobian of the operator $\mathcal{F}$ with respect to its explicit parameters.

To show that this AD-based accumulation is mathematically equivalent to the discrete sensitivity equations, we define the local operators as partial derivatives of the mapping $\mathcal{F}$ evaluated at each time step. Specifically, let $G_\ell := \frac{\partial \mathcal{F}}{\partial u}(u_{\mathbf{a},\ell-1}, \mathbf{a})$ be the state transition Jacobian and $H_\ell := \frac{\partial \mathcal{F}}{\partial \mathbf{a}}(u_{\mathbf{a},\ell-1}, \mathbf{a})$ be the partial parameter Jacobian. By applying the total derivative chain rule to \eqref{equ:F_mapping}, the forward sensitivity $v_\ell = \mathrm{d} u_{\mathbf{a},\ell} / \mathrm{d} \mathbf{a}$ evolves linearly as:
\begin{align*}
	v_\ell = G_\ell v_{\ell-1} + H_\ell. 
\end{align*}
With $v_0 = 0$, the sensitivity at the final state $N_t$ expands to $v_{N_t} = \sum_{\ell=1}^{N_t} \left( \prod_{k=\ell+1}^{N_t} G_k \right) H_\ell$. In backward mode, the adjoint variables satisfy $\lambda_{\ell-1} = \lambda_\ell G_\ell$, which yields:
\begin{align*}
	\lambda_\ell = \frac{\partial \mathcal{J}_N}{\partial u_{\mathbf{a},N_t}} \left( \prod_{k=\ell+1}^{N_t} G_k \right).    
\end{align*}
Substituting these into the accumulation formula \eqref{equ:AD_total_grad} (omitting $\mathcal{R}$ for brevity) gives:
\begin{equation}
	\sum_{\ell=1}^{N_t} \lambda_\ell H_\ell = \frac{\partial \mathcal{J}_N}{\partial u_{\mathbf{a},N_t}} \left[ \sum_{\ell=1}^{N_t} \left( \prod_{k=\ell+1}^{N_t} G_k \right) H_\ell \right] \equiv \frac{\partial \mathcal{J}_N}{\partial u_{\mathbf{a},N_t}} v_{N_t}.
\end{equation}
This identity confirms that the AD calculated gradient is precisely the total derivative of the original optimization problem.

Finally, we verify that the operators $G_\ell$ and $H_\ell$ within the AD framework align perfectly with the SIFSM-based sensitivity equations. Direct differentiation of the SIFSM scheme in Fourier space yields:
\begin{equation}
	\{H_{i,\ell}\}_k = \frac{- M|\mathbf{k}|^{2\alpha}\Delta t \{\phi_i(u_{\mathbf{a},\ell-1})\}_k}{1 + M\varepsilon^2|\mathbf{k}|^{2\alpha+2}\Delta t},
\end{equation}
and for the state transition term $G_\ell v_{i,\ell-1}$:
\begin{equation}
	\{G_\ell v_{i,\ell-1}\}_k = \frac{\{v_{i,\ell-1}\}_k - M|\mathbf{k}|^{2\alpha}\Delta t \left\{ \sum_{n=0}^N a_n \phi_n'(u_{\mathbf{a},\ell-1}) v_{i,\ell-1} \right\}_k}{1 + M\varepsilon^2|\mathbf{k}|^{2\alpha+2}\Delta t}.
\end{equation}
Substituting $H_{i,\ell}$ and $G_\ell v_{i,\ell-1}$ into the sensitivity evolution $v_{i,\ell} = G_\ell v_{i,\ell-1} + H_{i,\ell}$ exactly recovers the discrete sensitivity scheme \eqref{equ:iter_v}. This consistency shows that the AD based computational graph preserves the numerical structure of the SIFSM throughout the inversion process. The resulting Dual-Time Phase Field inversion(DTPFI) procedure updates the parameters using the Adam optimizer \cite{kingma2014adam}, as summarized in Algorithm \ref{alg}.

\begin{algorithm}[H]
\caption{DTPFI: Dual-Time Phase Field Inversion}
\label{alg}
\begin{algorithmic}[1]
\REQUIRE Measurement data $\{u_{obs}(\mathbf{x}^{(n)},t_1), u_{obs}(\mathbf{x}^{(n)},t_2)\}_{n=1}^{N_s}$, initial parameters $\mathbf{a}_0$, regularization parameter $\beta$, learning rate $\eta$, Adam hyperparameters $\beta_1, \beta_2$.
\ENSURE Optimized parameters $\mathbf{a}^*$.
\STATE Initialize momentum vectors $\mathbf{m}_0 = \mathbf{0}, \mathbf{v}_0 = \mathbf{0}$.
\WHILE{not converged}
    \STATE Evaluate potential function $f_{N}(u) = \sum_{n=0}^N a_n \phi_n(u)$.
    \STATE Compute state $u_{\mathbf{a}}(\cdot, t_2)$ by solving Equation \eqref{equ:ua} starting from $u_{obs}(\cdot, t_1)$.
    \STATE Calculate total objective function $\mathcal{J}_{N,N_s}(\mathbf{a})$ as \eqref{equ:discrete_loss_func}.
    \STATE Compute gradient $\mathbf{g} = \nabla_{\mathbf{a}} \mathcal{J}_{N,N_s}(\mathbf{a})$ using automatic differentiation.
    \STATE Update $\mathbf{m}$ and $\mathbf{v}$ using Adam update rules.
    \STATE Update parameters: $\mathbf{a} \leftarrow \mathbf{a} - \eta \cdot \text{Adam}(\mathbf{g}, \mathbf{m}, \mathbf{v})$.
\ENDWHILE
\end{algorithmic}
\end{algorithm}

\section{Numerical Experiments}
In this section, we test the effectiveness of the proposed DTPFI algorithm through numerical experiments involving different phase field equations and potential functions. The measurement data are generated using the semi-implicit Fourier spectral method described above and are perturbed with prescribed noise. Throughout the numerical experiments, all functions are assumed to satisfy periodic boundary conditions.

To approximate the chemical potential energy density function $f$, we use the $n$-th order Chebyshev polynomials $\phi_n$ as basis functions and retain the first 10 terms. For the inverse problem, the computational domain $\Omega$ is discretized by uniformly distributed grid points with 128 nodes in each spatial dimension. The observational data consist of the full-field distributions $u(\cdot,T/2)$ and $u(\cdot,T)$ at the intermediate time $T/2$ and final time $T$, respectively. To evaluate the robustness of the algorithm to measurement noise, we perturb the data with Gaussian white noise of intensity 0.01, expressed as  
\begin{align*}  
    \widetilde{u}(x,t) = u(x,t) + \sigma\xi(x,t), \quad t \in\{T/2,T\},  
\end{align*}  
where $\xi(\cdot,t)$ follows the standard normal distribution $N(0,1)$, and the noise intensity $\sigma = 0.01$. The regularization parameter $\beta$ in function \eqref{equ:discrete_loss_func} is chosen as 0.001. The inverse problem is solved by implementing the Adam optimizer with the following configuration: learning rate $\kappa = 0.01$, momentum parameters $\beta_1=0.9$ and $\beta_2=0.999$. All neural network parameters are initialized to zero before training. In the error analysis, we quantify the deviation between the inverted solution $y_{pred}(\cdot)$ and the ground truth $y_{ref}(\cdot)$ using both the absolute error defined by the $L^\infty$-norm $\| y_{ref}(\cdot) - y_{pred}(\cdot) \|_{L^\infty}$ and the relative error defined by the $L^2$-norm $\| y_{ref}(\cdot) - y_{pred}(\cdot) \|_{L^2}^2 / \| y_{ref}(\cdot) \|_{L^2}^2$. All computations were performed on a computing platform equipped with an NVIDIA GeForce RTX 2080 Ti GPU. The code used in this study is available at \url{https://github.com/279449621/DTPFI.git}.

\subsection{Allen-Cahn equation with logarithmic potential function}
\begin{figure}[htb]
    \centering
\includegraphics[width=0.95\linewidth]{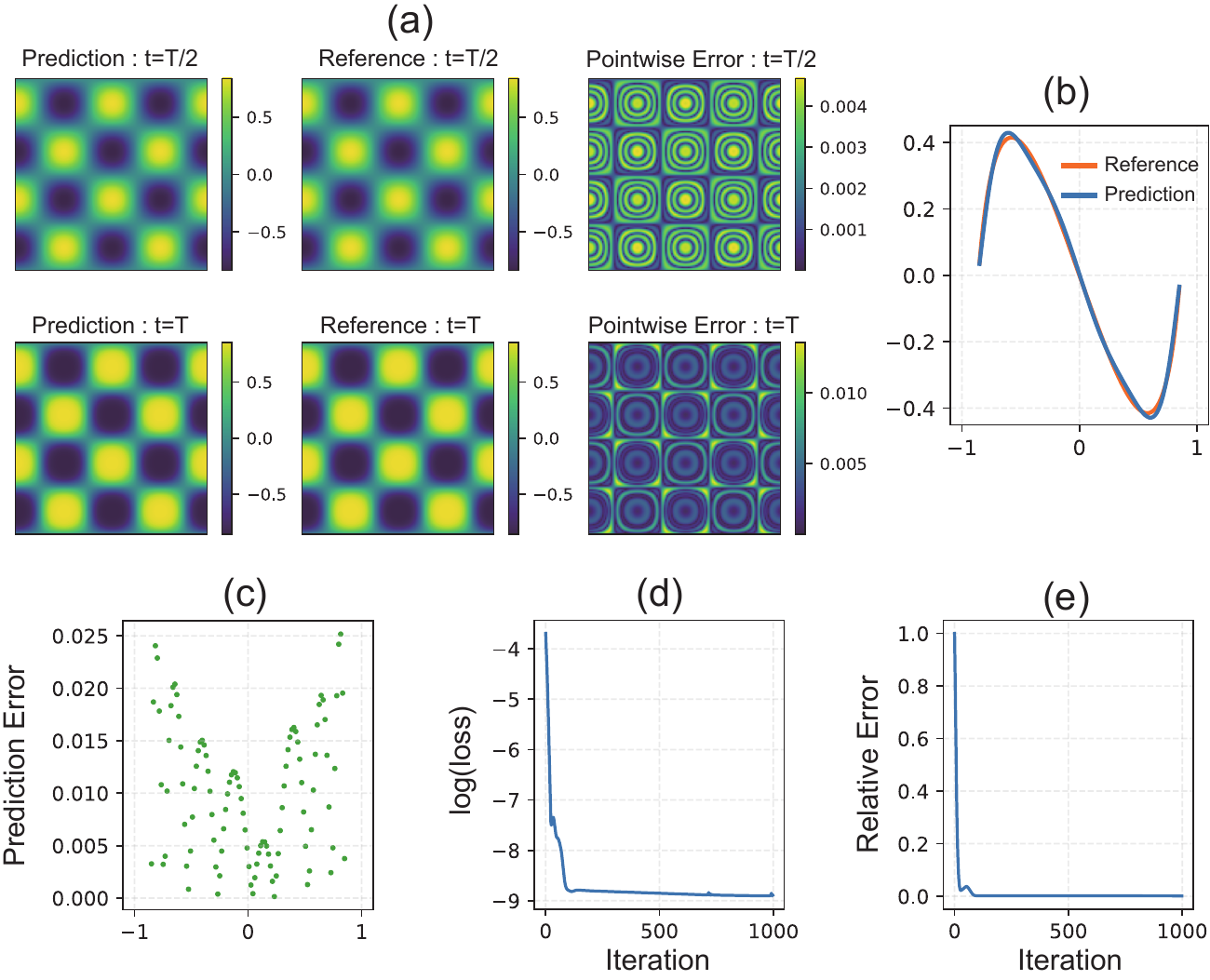}
    \caption{Inversion results for the Allen-Cahn equation. (a) Comparison of field evolution obtained using the reference and recovered coefficients at $t = T/2$ and $t = T$.
(b) Comparison between the reconstructed and reference potential functions on the interval $[-0.85, 0.85]$.
(c) Pointwise absolute error between the reconstructed and reference potential functions on the interval $[-0.85, 0.85]$.
(d) Evolution of the relative error between the predicted and reference fields at $t = T$ during optimization.
(e) Evolution of the relative error between the reconstructed and reference potential functions during optimization.}
    \label{fig:AC2D}
\end{figure}

This example considers the Allen-Cahn equation with a logarithmic potential function, specifically
\begin{equation}
    \left\{
    \begin{aligned}
        &\frac{\partial u}{\partial t} = 0.01^2\Delta u - \left(\ln\left(\frac{1+u}{1-u}\right) - 3u\right), &&\text{in}\ \Omega\times [0,T],\\
        &u(x,y,0) = 0.8\sin(4\pi x)\cos(4\pi y), &&\text{in}\ \Omega,
    \end{aligned}
    \right.
\end{equation}
where $\Omega = [0,1]^2$ and $T = 2$. 

As shown in Fig.~\ref{fig:AC2D}(a), the maximum absolute errors between the phase field evolutions generated by the reference and reconstructed potential functions are $4.7\times10^{-3}$ and $1.36\times10^{-2}$ at $T/2$ and $T$, respectively. This accuracy is consistent with the quality of the DTPFI reconstruction in the non-singular region $x \in (-0.85,0.85)$, where the pointwise errors remain below $2.5\times10^{-2}$ (Fig.~\ref{fig:AC2D}(b,c)). The optimization process also shows stable convergence: the discrepancy between the predicted ($\phi_{\mathrm{pred}}$) and reference ($\phi_{\mathrm{ref}}$) phase field profiles at time $T$ decreases monotonically over the iterations, while the relative error $\epsilon_{\mathrm{rel}}$ between the reconstructed and true potential functions rapidly drops to approximately $5\%$ within 20 iterations.

\subsection{Cahn-Hilliard equation with polynomial potential function}
\begin{figure}
    \centering
    \includegraphics[width=0.95\linewidth]{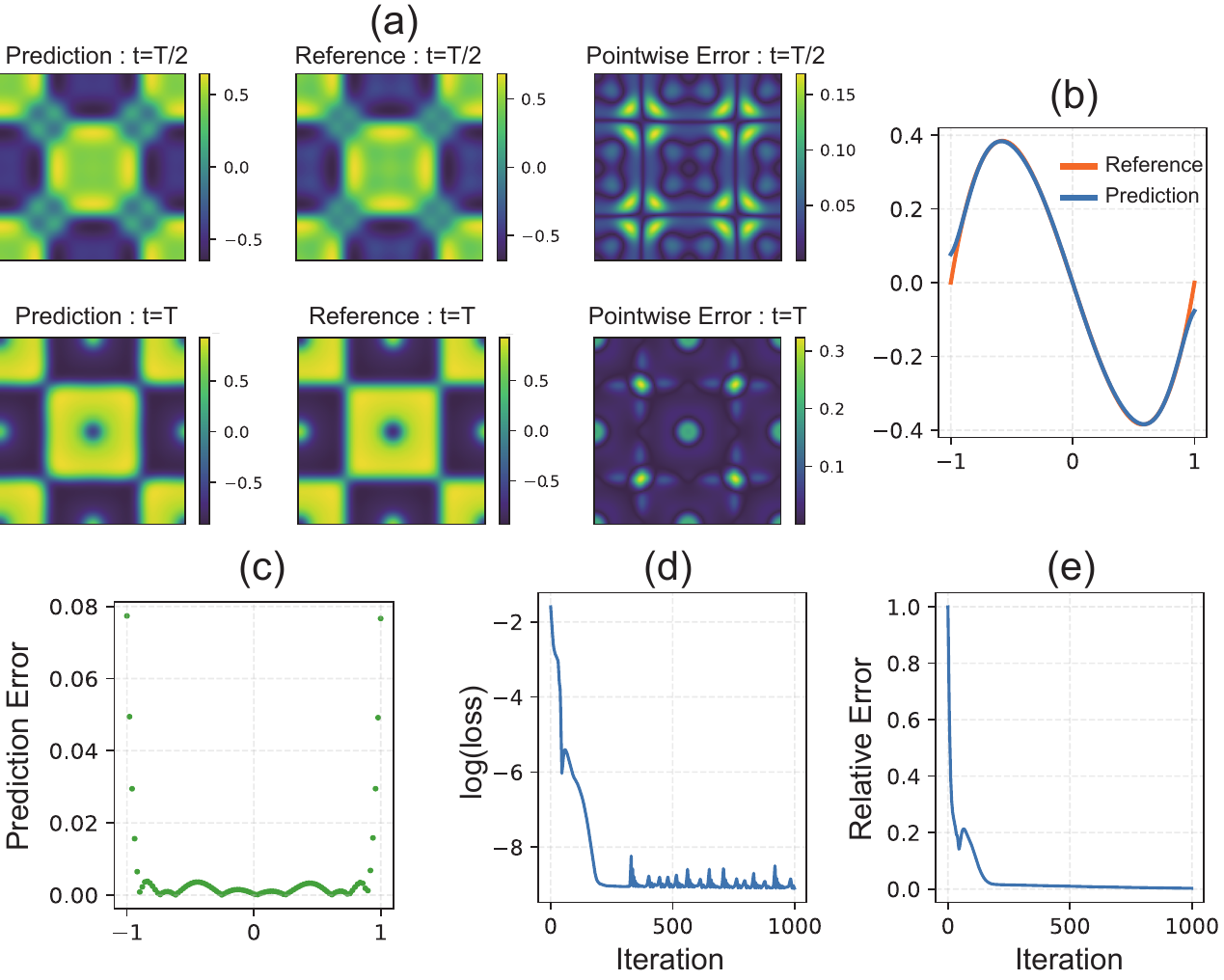}
    \caption{Inversion results for the Cahn-Hilliard equation. (a) Comparison of field evolution obtained using the reference and recovered coefficients at $t = T/2$ and $t = T$.
(b) Comparison between the reconstructed and reference potential functions.
(c) Pointwise absolute error between the reconstructed and reference potential functions.
(d) Evolution of the relative error between the predicted and reference fields at $t = T$ during optimization.
(e) Evolution of the relative error between the reconstructed and reference potential functions during optimization.}
    \label{fig:CH2D}
\end{figure}

This example considers the Cahn-Hilliard equation with a polynomial potential function, specifically
\begin{equation}
    \left\{
    \begin{aligned}
        &\frac{\partial u}{\partial t} = -0.02^2\Delta^2 u + \Delta (u^3 - u), &&\text{in}\ \Omega\times [0,T],\\
        &u(x,y,0) = 0.4\cos(3\pi x)\cos(3\pi y), &&\text{in}\ \Omega,
    \end{aligned}
    \right.
\end{equation}
where $\Omega = [0,1]^2$ and $T = 0.03$. 

As shown in Fig.~\ref{fig:CH2D}, the reconstructed chemical energy density function $f$ agrees well with the reference function. The maximum absolute error in the $L^{\infty}$ norm remains at $0.01$, which is comparable to the prescribed noise level, while the $L^2$ relative error reaches $1.7\times10^{-4}$ (Fig.~\ref{fig:CH2D}(c)). Phase field simulations performed with the recovered parameters are also consistent with the reference evolution. Although the maximum absolute errors at $T/2$ and $T$ are $2.46\times10^{-2}$ and $3.23\times10^{-1}$, respectively, the errors remain small over most of the spatial domain (Fig.~\ref{fig:CH2D}(a)), and the corresponding relative errors of the phase field are $2.96\times10^{-5}$ and $1.2\times10^{-2}$. In addition, the field error at $t=T$ decreases rapidly with the iteration number until convergence (Fig.~\ref{fig:CH2D}(d)), and the relative error between the reconstructed and reference potential functions falls below $5\%$ within 40 iterations (Fig.~\ref{fig:CH2D}(e)). These results indicate that DTPFI can accurately reconstruct polynomial chemical energy parameters using field measurements from only two time points.

\subsection{Sensitivity analysis}
This subsection examines how three key factors affect the performance of DTPFI: the truncation order of the basis expansion, the choice of basis functions, and the noise level in the measurement data.

\subsubsection{Truncation order}
We first evaluate the inversion accuracy for different truncation orders of the Chebyshev polynomial expansion. For the polynomial potential inversion in the Cahn-Hilliard equation, the maximum errors for the 5\textsuperscript{th}-, 10\textsuperscript{th}-, and 15\textsuperscript{th}-order truncations are $4.6\times10^{-3}$, $1.08\times10^{-2}$, and $2.9\times10^{-2}$, respectively (Fig.~\ref{fig:terms}(a)). The error increases as more terms are retained because the target chemical potential function $-x+x^3$ can already be represented exactly by the first four Chebyshev modes. Additional higher-order terms therefore introduce redundant parameters, which increase the optimization difficulty and may reduce inversion accuracy.

By contrast, for the logarithmic potential in the Allen-Cahn equation, the maximum errors on the interval $[-0.85,0.85]$ are $7.14\times10^{-2}$, $2.87\times10^{-2}$, and $1.93\times10^{-2}$ for the 5\textsuperscript{th}-, 10\textsuperscript{th}-, and 15\textsuperscript{th}-order truncations, respectively (Fig.~\ref{fig:terms}(b)). This decreasing trend indicates that, for singular logarithmic potentials, the Chebyshev approximation improves as more terms are retained.

Based on these observations, we use the 10\textsuperscript{th}-order truncation as a practical compromise: it provides sufficient accuracy for logarithmic potentials while avoiding the additional optimization burden introduced by excessive basis terms.

\begin{figure}
    \centering
\includegraphics[width=1\linewidth]{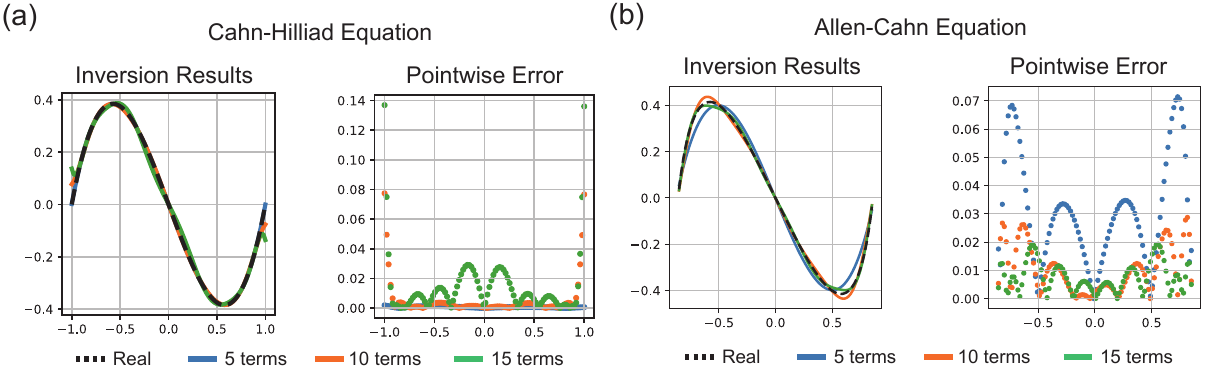}
    \caption{Inversion performance for different truncation orders. (a) Cahn-Hilliard equation: comparison between the reconstructed and reference potential functions (left) and the corresponding pointwise absolute errors (right). (b) Allen-Cahn equation: comparison between the reconstructed and reference potential functions (left) and the corresponding pointwise absolute errors (right).}
    \label{fig:terms}
\end{figure}

\subsubsection{Selection of basis functions}

\begin{table}[htbp]
  \centering
  \caption{Comparison of different basis functions in DTPFI.}
  \begin{tabular}{c *{3}{c} *{3}{c}}
    \toprule
    & \multicolumn{3}{c}{Cahn-Hilliard Equation} & \multicolumn{3}{c}{Allen-Cahn Equation} \\
    \cmidrule(lr){2-4} \cmidrule(lr){5-7}
    Basis & {Chebyshev} & {Monomial} & {Legendre} & {Chebyshev} & {Monomial} & {Legendre} \\
    \midrule
    Abs.Error & $1.08 \times 10^{-2}$ & $1.75 \times 10^{-1}$ & $1.26 \times 10^{-2}$ & $2.52 \times 10^{-2}$ & $4.07 \times 10^{-2}$ & $3.10 \times 10^{-2}$ \\
    Rel.Error & $1.74 \times 10^{-4}$ & $1.07 \times 10^{-1}$ & $7.27 \times 10^{-4}$ & $1.62 \times 10^{-3}$ & $4.49 \times 10^{-3}$ & $1.46 \times 10^{-3}$ \\
    \bottomrule
  \end{tabular}
  \label{tab:base}
\end{table}

We next compare the inversion performance obtained with three types of basis functions: Chebyshev, monomial, and Legendre bases. The corresponding errors are given in Table~\ref{tab:base}.

For both polynomial and logarithmic potential inversion, Chebyshev and Legendre polynomial bases yield higher accuracy than monomial bases. In particular, monomial basis functions perform substantially worse for the polynomial potential inversion. This degradation is caused by interference from higher-order monomial terms during training (Fig.~\ref{fig:Monomial}). In the early optimization stage, the 3rd, 5th, 7th, and 9th coefficients move in directions opposite to their reference values. These higher-order monomial terms introduce unnecessary degrees of freedom, which can lead to overfitting and reduce the accuracy of parameter recovery. This numerical instability is especially evident for polynomial potential inversion.

\subsubsection{Stability analysis with varying noise}
We finally compare the inversion performance under four noise levels, with noise intensities $\sigma=0$, $0.01$, $0.05$, and $0.1$.

\begin{figure}
    \centering
\includegraphics[width=0.95\linewidth]{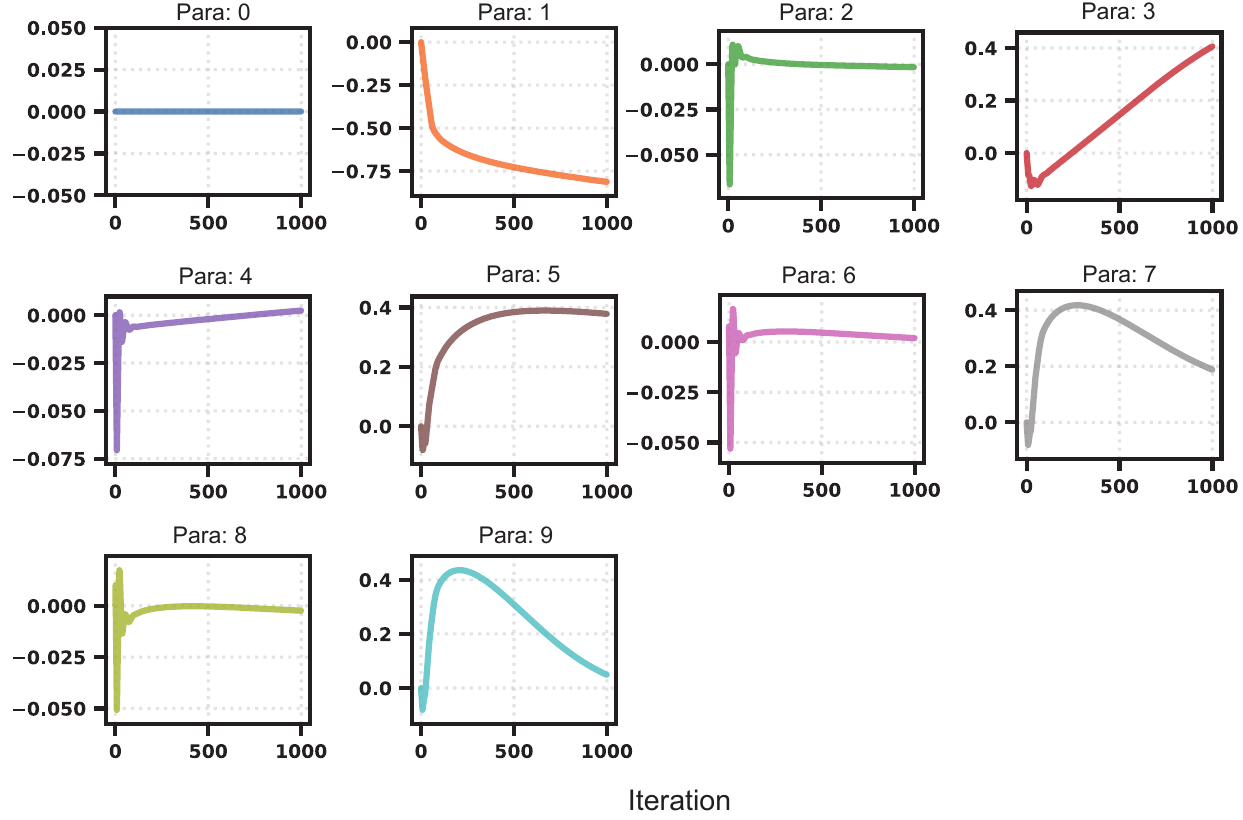}
    \caption{Evolution of monomial-basis coefficients during inversion of the Cahn-Hilliard equation.}
    \label{fig:Monomial}
\end{figure}

The inversion errors are summarized in Table~\ref{tab:noise}. Overall, DTPFI remains robust under noisy measurements. For example, when Gaussian noise with intensity $\sigma=0.01$ is added, the absolute error for the Cahn-Hilliard equation increases only from $1.01\times10^{-2}$ to $1.08\times10^{-2}$, while that for the Allen-Cahn equation changes from $2.40\times10^{-2}$ to $2.52\times10^{-2}$. These variations are less than 8\% and 5\%, respectively, indicating that the reconstruction accuracy remains close to the noise-free case under low noise levels. Even for larger noise intensities, the reconstruction errors remain controlled and comparable to the perturbation level, demonstrating the robustness of DTPFI with respect to measurement noise.

\begin{table}[h]
\centering
\caption{Inversion errors under different noise levels.}
\begin{tabular}{ccccc}
\toprule
\multicolumn{5}{c}{\textbf{Cahn-Hilliard Equation}} \\
\midrule
Noise Level & 0 & 0.01 & 0.05 & 0.10 \\
\cmidrule(r){1-1} \cmidrule(lr){2-2} \cmidrule(lr){3-3} \cmidrule(l){4-4}\cmidrule(l){5-5}
Abs.Error & $1.01\times10^{-2}$ & $1.08\times10^{-2}$ & $1.75\times10^{-1}$ & $1.26\times10^{-2}$ \\
\midrule
Rel.Error & $1.21\times10^{-4}$ & $1.74\times10^{-4}$ & $1.07\times10^{-1}$ & $7.27\times10^{-4}$ \\
\midrule
\multicolumn{5}{c}{\textbf{Allen-Cahn Equation}} \\
\midrule
Noise Level & 0 & 0.01 & 0.05 & 0.10 \\
\cmidrule(r){1-1} \cmidrule(lr){2-2} \cmidrule(lr){3-3} \cmidrule(l){4-4}\cmidrule(l){5-5}
Abs.Error & $2.40\times10^{-2}$ & $2.52\times10^{-2}$ & $4.07\times10^{-2}$ & $3.10\times10^{-2}$ \\
\midrule
Rel.Error & $1.57\times10^{-3}$ & $1.62\times10^{-3}$ & $4.49\times10^{-3}$ & $1.46\times10^{-3}$ \\
\bottomrule
\end{tabular}
\label{tab:noise}
\end{table}

\subsection{Extended applications}
In this subsection, we consider two more challenging inverse problems for phase field models. The first involves simultaneous recovery of the mobility and chemical potential functions in a Cahn-Hilliard equation with concentration-dependent mobility. The second concerns the identification of chemical energy parameters and an elastic-modulus related parameter in a coupled Cahn-Hilliard--Allen-Cahn phase field system.

\subsubsection{Cahn-Hilliard equation with variable mobility}
\begin{figure}  
\centering  
\includegraphics[width=0.95\linewidth]{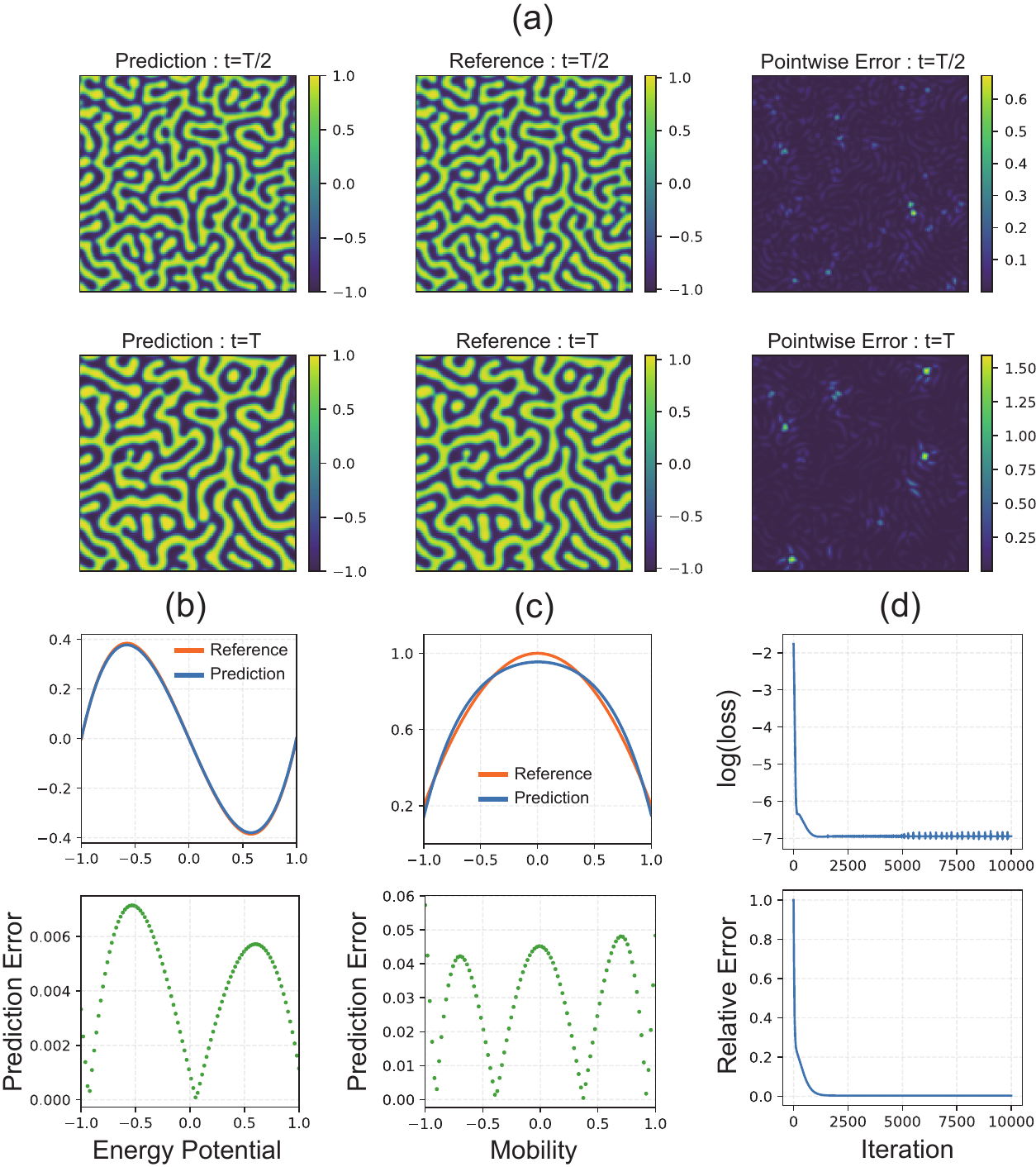}  
\caption{Inversion results for the potential function and field-dependent mobility in the Cahn-Hilliard equation. (a) Comparison of field evolution obtained using the reference and recovered coefficients at $t=T/2$ and $t=T$. (b) Comparison between the reconstructed and reference potential functions. (c) Comparison between the reconstructed and reference mobility functions. (d) Evolution of the relative error between the predicted and reference fields at $t=T$ during optimization (top) and between the reconstructed and reference potential functions (bottom).}  
\label{fig:CH_M}  
\end{figure}

Following \cite{zhu1999coarsening}, we consider the Cahn-Hilliard equation with concentration dependent mobility, given by  
\begin{equation}  
\left\{  
\begin{aligned}  
&\frac{\partial u}{\partial t} = \nabla\cdot \left[(1-0.8u^2)\nabla (u^3-u-0.5\Delta u)\right],&&\text{in}\ \Omega\times [0,T],\\  
&u(\cdot,0) = 0.1\nu, &&\text{in}\ \Omega,  
\end{aligned}  
\right.  
\end{equation}  
where $\nu(\cdot)$ is uniformly distributed in $[-1,1]$ over the domain $\Omega$, and the spatial and temporal step sizes are normalized to 1. The corresponding inverse problem is formulated as the following constrained optimization problem:  
\begin{align*}  
\min\limits_{\mathbf{a},\mathbf{b}} &\|u_{\mathbf{a},\mathbf{b}}(\cdot,t_2)-u(\cdot,t_2)\|_{L^2(\Omega)}^2+\beta (\|\mathbf{a}\|^2+\|\mathbf{b}\|^2), \\  
&\text{subject to: } u_{\mathbf{a},\mathbf{b}}(\cdot,t_1) = u(\cdot,t_1),  
\end{align*}  
Here, the unknown parameters are \(\mathbf{a}=(a_0,\cdots,a_4)\) and \(\mathbf{b}=(b_0,\cdots,b_4)\), and the state variable \(u_{\mathbf{a},\mathbf{b}}\) satisfies  
\begin{equation}  
\left\{  
\begin{aligned}  
&\frac{\partial u_{\mathbf{a},\mathbf{b}}}{\partial t} = \nabla\cdot \left[\left(\sum\limits_{n=0}^4 a_n\phi_n(u_{\mathbf{a},\mathbf{b}})\right)\nabla \left(\sum\limits_{n=0}^4 b_n\phi_n(u_{\mathbf{a},\mathbf{b}})-0.5 \Delta u_{\mathbf{a},\mathbf{b}}\right)\right],&&\text{in}\ \Omega\times [t_1,t_2],\\  
&u_{\mathbf{a},\mathbf{b}}(\cdot,0) = u(\cdot,t_1), &&\text{in}\ \Omega.  
\end{aligned}  
\right.  
\end{equation}

The inversion results in Fig.~\ref{fig:CH_M}(b,c) show that DTPFI accurately reconstructs both the potential function and the mobility function, with maximum absolute errors of $7\times 10^{-3}$ and $6\times 10^{-2}$, respectively. The larger error in the mobility reconstruction is attributable to the relatively weak influence of mobility on the phase field evolution. Nevertheless, the field evolution obtained from the recovered functions remains close to the reference solution, with relative errors of $5.07\times 10^{-2}$ and $1.45\times 10^{-1}$ at the two measurement times (Fig.~\ref{fig:CH_M}(a)). During optimization, the discrepancy between the predicted and reference fields at time \(T\) decreases rapidly at first, and both the potential and mobility functions approach a neighborhood of the reference solution (Fig.~\ref{fig:CH_M}(d)). At later iterations, the greater difficulty of recovering the mobility function to high precision leads to mild error oscillations, while the overall prediction error remains stable.

\subsubsection{Coupled CH-AC system with elastic energy}

We next employ the dimensionless coupled Cahn-Hilliard--Allen-Cahn equations from \cite{wang1995microstructural} to model phase evolution in Ni-Al binary alloys. The governing equations are:
\begin{equation}
\label{equ:phase_system}
\left\{
\begin{aligned}
&\frac{\partial c}{\partial t} = M\Delta \frac{\delta F}{\delta c}, &&\text{in}\ \Omega\times [0,T],\\
&\frac{\partial \eta}{\partial t} = -L\Delta \frac{\delta F}{\delta \eta},&&\text{in}\ \Omega\times [0,T],\\
&c(\cdot,0) = 0.6,&&\text{in}\ \Omega,\\
&\eta(\cdot,0) = 0.001\nu(\cdot),&&\text{in}\ \Omega,
\end{aligned}
\right.
\end{equation}
Here, $\nu$ is uniformly distributed in $[-1,1]$ over the computational domain $\Omega=[0,127]^2$, and the time interval is $t\in[0,40]$. The kinetic coefficients are set to $L=1$ and $M=0.4$. The total free energy functional $\mathcal{F}$ consists of chemical, gradient, and elastic energy contributions, given by:
\begin{align*}
&\mathcal{F} =\mathcal{F}_{gra}+\mathcal{F}_{chem}+\mathcal{F}_{elas},\\
&\mathcal{F}_{gra} = \int_\Omega \frac{\alpha}{2}|\nabla c|^2 + \frac{\beta}{2}|\nabla \eta|^2 \mathrm{d}V,\\
&\mathcal{F}_{chem} = \int_\Omega \frac{A}{2}(c-c_1)^2 + \frac{B}{2}(c_2-c)\eta^2 - \frac{C}{4}\eta^4 + \frac{D}{6}\eta^6 \mathrm{d}V,\\
&\mathcal{F}_{elas}= \frac{1}{2}\int \mathcal{B}(n) |\mathcal{F}(c)(k)|^2 \frac{\mathrm{d}^2k}{(2\pi)^2},
\end{align*}
where the chemical energy parameters are $A=7.5$, $B=4$, $C=1$, $D=0.5$, $c_1=0.1$, and $c_2=0.5$, and the gradient coefficients are $\alpha=\beta=1.7$. In the elastic energy term, $k$ denotes the wave vector, and the elastic interaction potential $\mathcal{B}(n)$ is defined as:
\begin{align*}
B(n) &= \mu\varepsilon_0^2 n_x^2n_y^2,\\
\mu &= \frac{4(C_{12}+2C_{44}-C_{11})(C_{11}+2C_{12})^2}{C_{11}(C_{11}+C_{12}+2C_{44})},
\end{align*}
where $n_x$ and $n_y$ are the components of the unit vector in frequency space, and $C_{11}$, $C_{12}$, and $C_{44}$ are elastic constants. In the experiments, we set $\mu\varepsilon_0^2=40$.

\begin{figure}
    \centering
    \includegraphics[width=0.95\linewidth]{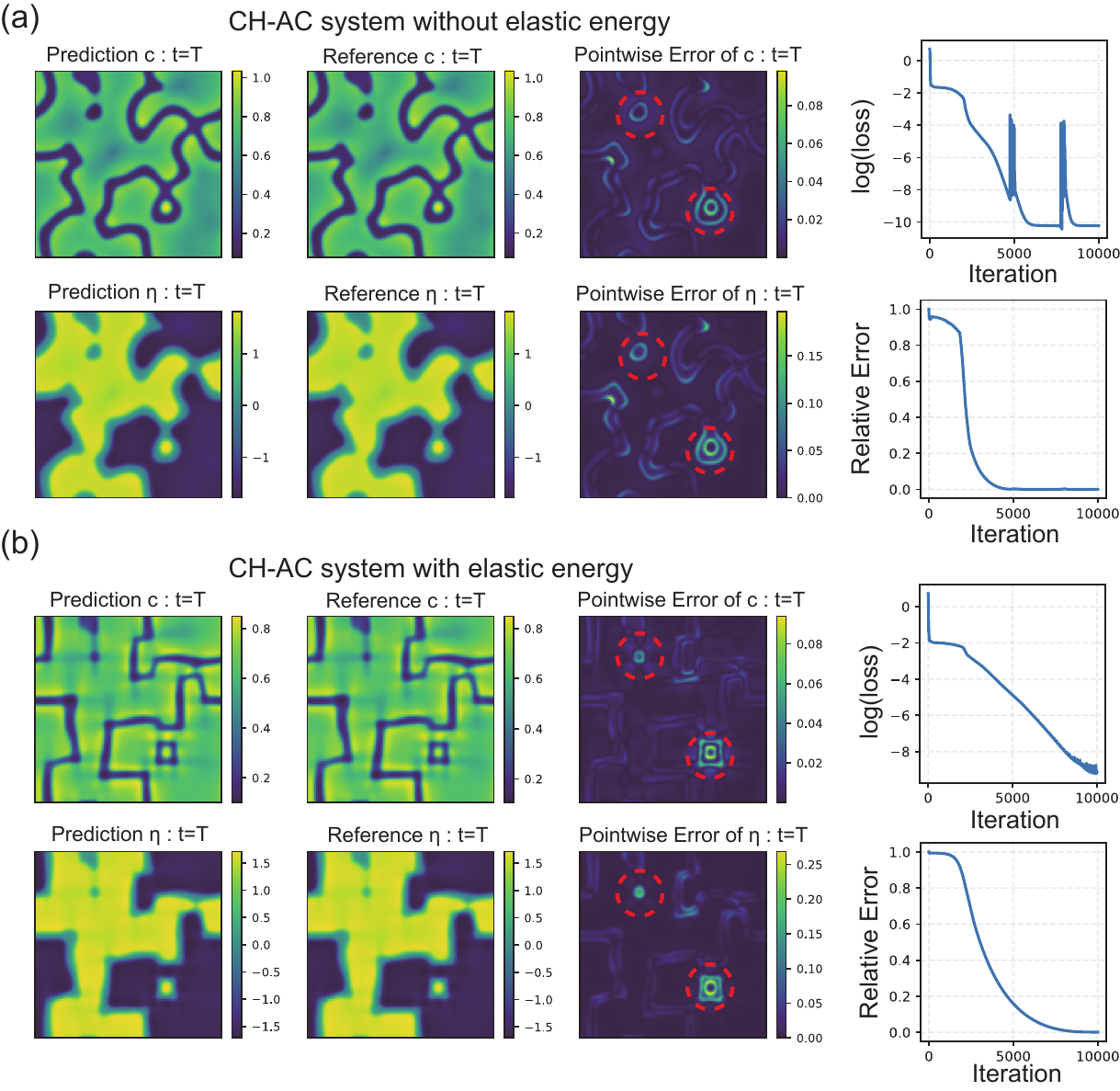}
    \caption{Comparison of inversion results for the CH-AC system. (a) Evolution of the concentration field (top) and order parameter field (bottom) at $t=T$ without elastic energy effects, comparing reference and reconstructed coefficients. (b) Evolution of the concentration field (top) and order parameter field (bottom) at $t=T$ with elastic energy effects, comparing reference and reconstructed coefficients.}
    \label{fig:ACCH2D}
\end{figure}

For the inverse problem, we consider two physical scenarios. The first neglects elastic energy effects and reconstructs only the chemical energy parameter set $\{A,B,C,D,c_1,c_2\}$. The second includes elastic energy effects and simultaneously reconstructs the chemical energy parameters together with the elastic-modulus-related parameter $\mu\varepsilon_0^2$. The input data consist of noisy measurements $\{c(\cdot,20),\eta(\cdot,20),c(\cdot,40),\eta(\cdot,40)\}$, contaminated with 1\% Gaussian white noise ($\sigma=0.01$). To enforce physical admissibility of the material parameters, we incorporate a non-negativity penalty and solve the optimization problem
\begin{align*}
    &\min\limits_{\mathbf{a}}\mathcal{J} = \min\limits_{\mathbf{a}}\|c_{\mathbf{a}}(\cdot,40)-c(\cdot,40)\|_2^2+\|\eta_{\mathbf{a}}(\cdot,40)-\eta(\cdot,40)\|_2^2+\beta\|\max(0,-\mathbf{a})\|_2^2,\\
    &s.t.\ c_{\mathbf{a}}(\cdot,20)=c_{obs}(\cdot,20), \eta_{\mathbf{a}}(\cdot,20)=\eta_{obs}(\cdot,20).
\end{align*}
Depending on whether elastic energy is included, the unknown coefficient vector is $\mathbf{a} \in \mathbb{R}^{6}$ or $\mathbf{a} \in \mathbb{R}^{7}$, and the regularization parameter is set to $\beta=10$. In the first scenario, where $\mathbf{a}=[a_1,\cdots,a_6]\in \mathbb{R}^6$, the concentration field $c_{\mathbf{a}}$ is governed by the chemical energy
\begin{align*}
    \mathcal{F}_{\text{chem},\mathbf{a}}=\int_{\Omega} \frac{a_1}{2}(c-a_5)^2+\frac{a_2}{2}(a_6-c)\eta^2-\frac{a_3}{4}\eta^4+\frac{a_4}{6}\eta^6\mathrm{d}V.
\end{align*}
When elastic energy effects are included, $c_{\mathbf{a}}$ is additionally governed by the elastic contribution
\begin{align*}
\mathcal{F}_{\text{elas},\mathbf{a}} =\frac{1}{2}\int a_7n_x^2n_y^2|\mathcal{F}(c)(k)|^2 \frac{\mathrm{d}^2k}{(2\pi)^2}.
\end{align*}
 
The inversion results for the two scenarios are given in Tables~\ref{tab:ACCH2D_NE} and~\ref{tab:ACCH2D}. Without elastic energy effects, the relative errors of all parameters except $c_1$ remain on the order of $10^{-3}$. When elastic energy is included, the parameters have substantially different magnitudes, yet most relative errors remain on the order of $10^{-2}$. The lower accuracy for $c_1$ has little influence on the predicted material evolution. As shown in Fig.~\ref{fig:ACCH2D}, the concentration and order parameter fields at the final time agree well between the reference and recovered parameters. Apart from minor discrepancies near interfaces and in small $\gamma$ and $\gamma'$ phase regions, marked by red dashed lines, the evolution patterns remain consistent across the domain. These results indicate that the recovered parameters can be used effectively for predictive phase field simulations.

\begin{table}[htbp]
  \centering
  \caption{Comparison of recovered coefficients and reference parameters for the CH-AC system without elastic energy effects.}
  \begin{tabular}{l *{6}{c}} 
    \toprule
    & {A} & {B} & {C} & {D} & {$c_1$} & {$c_2$} \\
    \midrule
    Reference    & 7.5    & 4      & 1      & 0.5    & 0.1    & 0.5   \\
    Predicted    & 7.55   & 4.02   & 1.00   & 0.50   & 0.17   & 0.50  \\
    \midrule
    Abs. error   & $4.82 \times 10^{-2}$ & $2.06 \times 10^{-2}$ & $2.68 \times 10^{-3}$ & $1.51 \times 10^{-5}$ & $7.11 \times 10^{-2}$ & $8.78 \times 10^{-4}$ \\
    Rel. error   & $6.43 \times 10^{-3}$ & $5.15 \times 10^{-3}$ & $2.68 \times 10^{-3}$ & $3.02 \times 10^{-5}$ & $7.11 \times 10^{-1}$ & $1.76 \times 10^{-3}$ \\
    \bottomrule
  \end{tabular}
   
       \label{tab:ACCH2D_NE}
\end{table}

\begin{table}[htbp]
  \centering
  
  \caption{Comparison of recovered coefficients and reference parameters for the CH-AC system with elastic energy effects.}
  \footnotesize
  \begin{tabular}{c *{6}{c} c} 
    \toprule
    & {A} & {B} & {C} & {D} & {$c_1$} & {$c_2$} & {$\mu\varepsilon_0^2$} \\
    \midrule
    Reference    & 7.5    & 4      & 1      & 0.5    & 0.1    & 0.5   & 40 \\
    Predicted    & 7.59   & 4.05   & 1.03   & 0.51   & 0.32   & 0.50  & 38.66 \\
    \midrule
    Abs. error   & $8.63 \times 10^{-2}$ & $4.83 \times 10^{-2}$ & $2.52 \times 10^{-2}$ & $1.23 \times 10^{-2}$ & $2.17 \times 10^{-1}$ & $3.24 \times 10^{-4}$ & 1.34 \\
    Rel. error   & $1.15 \times 10^{-2}$ & $1.21 \times 10^{-2}$ & $2.52 \times 10^{-2}$ & $2.46 \times 10^{-2}$ & $2.17 \times 10^{0}$ & $6.48 \times 10^{-4}$ & $3.34 \times 10^{-2}$ \\
    \bottomrule
  \end{tabular}
  
      \label{tab:ACCH2D}
\end{table}
\section{Conclusion}
In this work, we developed the Dual Time Phase Field Inversion (DTPFI) framework for recovering nonlinear potential functions in phase field systems. By truncating the basis expansion, the inverse potential problem is reformulated as a finite dimensional optimization problem. Theoretically, we established second order differentiability of the parameter to state map, proved the existence of minimizers, and derived local strong convexity of the objective functional. Computationally, we employed automatic differentiation for efficient gradient evaluation and demonstrated its equivalence to analytical sensitivity-based gradient computation. Numerical experiments show that DTPFI accurately reconstructs logarithmic potentials in the Allen-Cahn equation and polynomial potentials in the Cahn-Hilliard equation. The framework was further extended to field dependent mobility inversion, simultaneous mobility-potential recovery, and joint parameter identification in coupled Cahn-Hilliard-Allen-Cahn systems. Future work will focus on theoretically analyzing the stability of the inversion process and further applying the computational framework to three dimensional phase field models.

\bibliographystyle{unsrt} 
\bibliography{references} 

@article{cahn1958free,
  title={Free energy of a nonuniform system. I. Interfacial free energy},
  author={Cahn, John W and Hilliard, John E},
  journal={The Journal of chemical physics},
  volume={28},
  number={2},
  pages={258--267},
  year={1958},
  publisher={American Institute of Physics}
}

@article{allen1979microscopic,
  title={A microscopic theory for antiphase boundary motion and its application to antiphase domain coarsening},
  author={Allen, Samuel M and Cahn, John W},
  journal={Acta metallurgica},
  volume={27},
  number={6},
  pages={1085--1095},
  year={1979},
  publisher={Elsevier}
}

@article{brunk2023uniqueness,
  title={On uniqueness and stable estimation of multiple parameters in the Cahn-Hilliard equation},
  author={Brunk, Aaron and Egger, Herbert and Habrich, Oliver},
  journal={Inverse Problems},
  volume={39},
  number={6},
  pages={065002},
  year={2023},
  publisher={IOP Publishing}
}

@article{ni2024uniqueness,
  title={A uniqueness theory on determining the nonlinear energy potential in phase-field system},
  author={Ni, Tianhao and Lai, Jun},
  journal={Inverse Problems},
  volume={40},
  number={12},
  pages={125005},
  year={2024},
  publisher={IOP Publishing}
}

@article{kahle2020parameter,
  title={Parameter identification via optimal control for a Cahn--Hilliard-chemotaxis system with a variable mobility},
  author={Kahle, Christian and Lam, Kei Fong},
  journal={Applied Mathematics \& Optimization},
  volume={82},
  number={1},
  pages={63--104},
  year={2020},
  publisher={Springer}
}

@article{bao2025pfwnn,
  title={PFWNN: A deep learning method for solving forward and inverse problems of phase-field models},
  author={Bao, Gang and Ma, Chang and Gong, Yuxuan},
  journal={Journal of Computational Physics},
  pages={113799},
  year={2025},
  publisher={Elsevier}
}

@article{wang1995microstructural,
  title={Microstructural evolution during the precipitation of ordered intermetallics in multiparticle coherent systems},
  author={Wang, Yunzhi and Khachaturyan, Armen},
  journal={Philosophical Magazine A},
  volume={72},
  number={5},
  pages={1161--1171},
  year={1995},
  publisher={Taylor \& Francis}
}

@book{miranville2019cahn,
  title={The Cahn--Hilliard equation: recent advances and applications},
  author={Miranville, Alain},
  year={2019},
  publisher={SIAM}
}

@article{liu2012micro,
  title={Micro scale 3D FEM simulation on thermal evolution within the porous structure in selective laser sintering},
  author={Liu, FR and Zhang, Q and Zhou, WP and Zhao, JJ and Chen, JM},
  journal={Journal of Materials Processing Technology},
  volume={212},
  number={10},
  pages={2058--2065},
  year={2012},
  publisher={Elsevier}
}

@article{hu2007phase,
  title={Phase-field modeling of microvoid evolution under elastic-plastic deformation},
  author={Hu, SY and Baskes, MI and Stan, M},
  journal={Applied physics letters},
  volume={90},
  number={8},
  year={2007},
  publisher={AIP Publishing}
}

@article{sun2023phase,
  title={Phase field modeling of topological magnetic structures in ferromagnetic materials: domain wall, vortex, and skyrmion},
  author={Sun, Jiajun and Shi, Shengbin and Wang, Yu and Wang, Jie},
  journal={Acta Mechanica},
  volume={234},
  number={2},
  pages={283--311},
  year={2023},
  publisher={Springer}
}

@article{rubin1999three,
  title={Three-dimensional model of precipitation of ordered intermetallics},
  author={Rubin, G and Khachaturyan, AG},
  journal={Acta materialia},
  volume={47},
  number={7},
  pages={1995--2002},
  year={1999},
  publisher={Elsevier}
}

@book{khachaturyan2013theory,
  title={Theory of structural transformations in solids},
  author={Khachaturyan, Armen G},
  year={2013},
  publisher={Courier Corporation}
}

@article{lima2014hybrid,
  title={A hybrid ten-species phase-field model of tumor growth},
  author={Lima, EABF and Oden, JT and Almeida, RC},
  journal={Mathematical Models and Methods in Applied Sciences},
  volume={24},
  number={13},
  pages={2569--2599},
  year={2014},
  publisher={World Scientific}
}

@article{travasso2011phase,
  title={The phase-field model in tumor growth},
  author={Travasso, Rui DM and Castro, Mario and Oliveira, Joana CRE},
  journal={Philosophical Magazine},
  volume={91},
  number={1},
  pages={183--206},
  year={2011},
  publisher={Taylor \& Francis}
}

@article{wendler2009phase,
  title={Phase-field simulations of partial melts in geological materials},
  author={Wendler, Frank and Becker, Jens K and Nestler, Britta and Bons, Paul D and Walte, Nicolas P},
  journal={Computers \& geosciences},
  volume={35},
  number={9},
  pages={1907--1916},
  year={2009},
  publisher={Elsevier}
}

@article{mikelic2015phase,
  title={Phase-field modeling of a fluid-driven fracture in a poroelastic medium},
  author={Mikeli{\'c}, A and Wheeler, Mary F and Wick, Thomas},
  journal={Computational Geosciences},
  volume={19},
  pages={1171--1195},
  year={2015},
  publisher={Springer}
}

@book{ginzburg2009theory,
  title={On the theory of superconductivity},
  author={Ginzburg, Vitaly L and Ginzburg, Vitaly Lazarevich and Landau, LD},
  year={2009},
  publisher={Springer}
}

@article{steinbach2009phase,
  title={Phase-field models in materials science},
  author={Steinbach, Ingo},
  journal={Modelling and simulation in materials science and engineering},
  volume={17},
  number={7},
  pages={073001},
  year={2009},
  publisher={IOP Publishing}
}

@article{chen2002phase,
  title={Phase-field models for microstructure evolution},
  author={Chen, Long-Qing},
  journal={Annual review of materials research},
  volume={32},
  number={1},
  pages={113--140},
  year={2002},
  publisher={Annual Reviews 4139 El Camino Way, PO Box 10139, Palo Alto, CA 94303-0139, USA}
}

@article{zhu1999coarsening,
  title={Coarsening kinetics from a variable-mobility Cahn-Hilliard equation: Application of a semi-implicit Fourier spectral method},
  author={Zhu, Jingzhi and Chen, Long-Qing and Shen, Jie and Tikare, Veena},
  journal={Physical Review E},
  volume={60},
  number={4},
  pages={3564},
  year={1999},
  publisher={APS}
}

@article{kingma2014adam,
  title={Adam: A method for stochastic optimization},
  author={Kingma, Diederik P and Ba, Jimmy},
  journal={arXiv preprint arXiv:1412.6980},
  year={2014}
}

@article{gopfert1973mathematische,
  title={Mathematische Optimierung in allgemeinen Vektorraumen},
  author={Gopfert, A},
  journal={Teubner, Leipzig},
  year={1973}
}

@article{cherfils2000generalized,
  title={Generalized Cahn-Hilliard equations with a logarithmic free energy.},
  author={Cherfils, L and Miranville, A},
  journal={Revista de la Real Academia de Ciencias Exactas, F{\'\i}sicas y Naturales},
  volume={94},
  number={1},
  pages={19--32},
  year={2000},
  publisher={Real Academia de Ciencias Exactas, F{\'\i}sicas y Naturales}
}

@article{elliott1989nonconforming,
  title={A nonconforming finite-element method for the two-dimensional Cahn--Hilliard equation},
  author={Elliott, Charles M and French, Donald A},
  journal={SIAM Journal on Numerical Analysis},
  volume={26},
  number={4},
  pages={884--903},
  year={1989},
  publisher={SIAM}
}

@book{evans2022partial,
  title={Partial differential equations},
  author={Evans, Lawrence C},
  volume={19},
  year={2022},
  publisher={American mathematical society}
}

@article{morocsanu2016well,
  title={Well-posedness for a phase-field transition system endowed with a polynomial nonlinearity and a general class of nonlinear dynamic boundary conditions},
  author={Moro{\c{s}}anu, Costic{\u{a}}},
  journal={Journal of Fixed Point Theory and Applications},
  volume={18},
  number={2},
  pages={225--250},
  year={2016},
  publisher={Springer}
}

@article{barrett2002finite,
  title={Finite element approximation of a degenerate Allen--Cahn/Cahn--Hilliard system},
  author={Barrett, John W and Blowey, James F},
  journal={SIAM Journal on Numerical Analysis},
  volume={39},
  number={5},
  pages={1598--1624},
  year={2002},
  publisher={SIAM}
}

@article{barrett2001fully,
  title={On fully practical finite element approximationsof degenerate Cahn-Hilliard systems},
  author={Barrett, John W and Blowey, James F and Garcke, Harald},
  journal={ESAIM: Mathematical Modelling and Numerical Analysis},
  volume={35},
  number={4},
  pages={713--748},
  year={2001},
  publisher={EDP Sciences}
}

@article{brunk2021analysis,
  title={Analysis of a viscoelastic phase separation model},
  author={Brunk, Aaron and D{\"u}nweg, Burkhard and Egger, Herbert and Habrich, Oliver and Luk{\'a}{\v{c}}ov{\'a}-Medvid'ov{\'a}, M{\'a}ria and Spiller, Dominic},
  journal={Journal of Physics: Condensed Matter},
  volume={33},
  number={23},
  pages={234002},
  year={2021},
  publisher={IOP Publishing}
}

@article{feng2006fully,
  title={Fully Discrete Finite Element Approximations of the Navier--Stokes--Cahn-Hilliard Diffuse Interface Model for Two-Phase Fluid Flows},
  author={Feng, Xiaobing},
  journal={SIAM journal on numerical analysis},
  volume={44},
  number={3},
  pages={1049--1072},
  year={2006},
  publisher={SIAM}
}

@article{chen1998applications,
  title={Applications of semi-implicit Fourier-spectral method to phase field equations},
  author={Chen, Long Qing and Shen, Jie},
  journal={Computer Physics Communications},
  volume={108},
  number={2-3},
  pages={147--158},
  year={1998},
  publisher={Elsevier}
}

@article{shen2010numerical,
  title={Numerical approximations of allen-cahn and cahn-hilliard equations},
  author={Shen, Jie and Yang, Xiaofeng},
  journal={Discrete Contin. Dyn. Syst},
  volume={28},
  number={4},
  pages={1669--1691},
  year={2010}
}

@article{eyre1998unconditionally,
  title={Unconditionally gradient stable time marching the Cahn-Hilliard equation},
  author={Eyre, David J},
  journal={MRS online proceedings library (OPL)},
  volume={529},
  pages={39},
  year={1998},
  publisher={Cambridge University Press}
}

@article{fu2022energy,
  title={Energy-decreasing exponential time differencing Runge--Kutta methods for phase-field models},
  author={Fu, Zhaohui and Yang, Jiang},
  journal={Journal of Computational Physics},
  volume={454},
  pages={110943},
  year={2022},
  publisher={Elsevier}
}

@article{shen2018scalar,
  title={The scalar auxiliary variable (SAV) approach for gradient flows},
  author={Shen, Jie and Xu, Jie and Yang, Jiang},
  journal={Journal of Computational Physics},
  volume={353},
  pages={407--416},
  year={2018},
  publisher={Elsevier}
}

@article{kaltenbacher2025reconstruction,
  title={Reconstruction of space-dependence and nonlinearity of a reaction term in a subdiffusion equation},
  author={Kaltenbacher, Barbara and Rundell, William},
  journal={Inverse Problems},
  volume={41},
  number={5},
  pages={055008},
  year={2025},
  publisher={IOP Publishing}
}

@article{kaltenbacher2026identification,
  title={Identification of space-dependent coefficients in two competing terms of a nonlinear subdiffusion equation},
  author={Kaltenbacher, Barbara and Rundell, William},
  journal={arXiv preprint arXiv:2601.21018},
  year={2026}
}

@article{jin2021error,
  title={Error analysis of finite element approximations of diffusion coefficient identification for elliptic and parabolic problems},
  author={Jin, Bangti and Zhou, Zhi},
  journal={SIAM Journal on Numerical Analysis},
  volume={59},
  number={1},
  pages={119--142},
  year={2021},
  publisher={SIAM}
}

@article{jin2023convergence,
  title={Convergence rate analysis of Galerkin approximation of inverse potential problem},
  author={Jin, Bangti and Lu, Xiliang and Quan, Qimeng and Zhou, Zhi},
  journal={Inverse Problems},
  volume={39},
  number={1},
  pages={015008},
  year={2023},
  publisher={IOP Publishing}
}

@article{zhang2022identification,
  title={Identification of potential in diffusion equations from terminal observation: analysis and discrete approximation},
  author={Zhang, Zhengqi and Zhang, Zhidong and Zhou, Zhi},
  journal={SIAM Journal on Numerical Analysis},
  volume={60},
  number={5},
  pages={2834--2865},
  year={2022},
  publisher={SIAM}
}

@article{bertacco2021stochastic,
  title={Stochastic Allen--Cahn equation with logarithmic potential},
  author={Bertacco, Federico},
  journal={Nonlinear Analysis},
  volume={202},
  pages={112122},
  year={2021},
  publisher={Elsevier}
}

@book{ladyzhenskaia1968linear,
  title={Linear and quasi-linear equations of parabolic type},
  author={Ladyzhenskaia, Olga Aleksandrovna and Solonnikov, Vsevolod Alekseevich and Ural'tseva, Nina N},
  volume={23},
  year={1968},
  publisher={American Mathematical Soc.}
}

@article{yuan2022pinn,
  title={A-PINN: Auxiliary physics informed neural networks for forward and inverse problems of nonlinear integro-differential equations},
  author={Yuan, Lei and Ni, Yi-Qing and Deng, Xiang-Yun and Hao, Shuo},
  journal={Journal of Computational Physics},
  volume={462},
  pages={111260},
  year={2022},
  publisher={Elsevier}
}

@article{cherfils2011cahn,
  title={The Cahn-Hilliard equation with logarithmic potentials},
  author={Cherfils, Laurence and Miranville, Alain and Zelik, Sergey},
  journal={Milan Journal of Mathematics},
  volume={79},
  number={2},
  pages={561--596},
  year={2011},
  publisher={Springer}
}
\end{document}